\numberwithin{equation}{section}
\numberwithin{figure}{section}
\theoremstyle{plain}
\newtheorem{thm}{\protect\theoremname}
\theoremstyle{definition}
\newtheorem{definition}[thm]{\protect\definitionname}
\theoremstyle{plain}
\newtheorem{lemma}[thm]{\protect\lemmaname}
\theoremstyle{remark}
\newtheorem{note}[thm]{\protect\notename}
\theoremstyle{remark}
\newtheorem{remark}[thm]{\protect\remarkname}
\theoremstyle{plain}
\newtheorem{proposition}[thm]{\protect\propositionname}
\theoremstyle{plain}
\newtheorem{corr}[thm]{\protect\corollaryname}
\theoremstyle{definition}
\newtheorem{example}[thm]{\protect\examplename}
\providecommand{\corollaryname}{Corollary}
\providecommand{\definitionname}{Definition}
\providecommand{\examplename}{Example}
\providecommand{\lemmaname}{Lemma}
\providecommand{\notename}{Note}
\providecommand{\propositionname}{Proposition}
\providecommand{\remarkname}{Remark}
\providecommand{\theoremname}{Theorem}
\newcommand{\bq}{\begin{equation}}
\newcommand{\eq}{\end{equation}}
\providecommand{\corollaryname}{Corollary}
\providecommand{\definitionname}{Definition}
\providecommand{\examplename}{Example}
\providecommand{\lemmaname}{Lemma}
\providecommand{\notename}{Note}
\providecommand{\propositionname}{Proposition}
\providecommand{\remarkname}{Remark}
\providecommand{\theoremname}{Theorem}
\begin{document}
\title{Linear Boundary Port-Hamiltonian Systems with Implicitly Defined Energy}
\author{B. Maschke $^{(1)}$ and A.J. van der Schaft $^{(2)}$ }
\address{$^{(1)}$ Université Claude Bernard Lyon 1, CNRS, LAGEPP UMR 5007,
France\\
$^{(2)}$ Bernoulli institute for Mathematics, CS and AI, University
of Groningen, the Netherlands }
\email{bernhard.maschke@univ-lyon1.fr; a.j.van.der.schaft@rug.nl}

\begin{abstract}
In this paper we extend the previously introduced class of boundary
port-Hamiltonian systems to boundary control systems where the variational
derivative of the Hamiltonian functional is replaced by a pair of
reciprocal differential operators. In physical systems modelling,
these differential operators naturally represent the constitutive
relations associated with the implicitly defined energy of the system
and obey Maxwell's reciprocity conditions. On top of the boundary
variables associated with the Stokes-Dirac structure, this leads to
additional boundary port variables and to the new notion of a Stokes-Lagrange
subspace. This extended class of boundary port-Hamiltonian systems
is illustrated by a number of examples in the modelling of elastic
rods with local and non-local elasticity relations. Finally it is shown
how a Hamiltonian functional on an extended state space can be associated
with the Stokes-Lagrange subspace, and how this leads to an energy
balance equation involving the boundary variables of the Stokes-Dirac
structure as well as of the Stokes-Lagrange subspace. 
\end{abstract}

\maketitle

\section{Introduction}
Port-Hamiltonian systems theory concerns \emph{open} physical
systems. It aims at developing a theoretical framework for modelling,
numerical simulation and control, based on the axioms of macroscopic
physics (including mechanics, electro-magnetism, irreversible thermodynamics),
network theory, as well as the geometric methods of mathematical physics
and control theory \cite{Geoplex09, Jeltsema_FTSC_14}. Port-Hamiltonian systems may
be regarded as the extension of classical Hamiltonian systems \cite[chap.3]{Arnold89}\cite[chap.3]{Libermann_marle87}
to open systems. This is done by including pairs of interface variables,
called \emph{port variables}, and extending the geometric structure
of Hamiltonian systems, namely the Poisson bracket, to a \emph{Dirac
structure} defined on spaces also containing the port variables \cite{schaftAEU95}\cite[chap.2]{Geoplex09}\cite{Jeltsema_FTSC_14}.
For infinite-dimensional port-Hamiltonian systems, a Dirac structure,
called \emph{Stokes-Dirac structure} \cite{schaftGeomPhys02}, is derived from the formally
skew-adjoint Hamiltonian differential operator \cite[Chap.7]{Olver93} of the system. This
allows to define pairs of \emph{boundary} port variables in which
all physically consistent boundary conditions, boundary control and
observation variables may be expressed \cite{schaftGeomPhys02}\cite[chap.4]{Geoplex09}
\cite{Rashad_IMA18_20yearsBPHS}. The development of the resulting class of boundary port-Hamiltonian systems has led to numerous
applications in modelling, simulation and control of conservative
systems (see e.g.\cite{MacchelliSIAM04,Hamroun10,Brugnoli_AMM2019_I}),
as well as of their extension to dissipative systems; retaining the
formulation based on Stokes-Dirac structures (see e.g.\cite{TrangVU15_2_MCMDS,Vincent_IFAC_WC20_PhaseFields,Baaiu_MCMDS_09}).
 In particular linear boundary port-Hamiltonian systems defined on  one-dimensional
spatial domains have led to a successful collection of work using
semi-group operator theory \cite{LeGorrecSIAM05,JacobZwart12,Macchelli_2021OverviewControlBPHS}.

However, this approach to boundary port-Hamiltonian systems relies on the assumption that the Hamiltonian
system is defined by a Hamiltonian differential operator together with a generating Hamiltonian functional that only depends
on the state variables and \emph{not} on their spatial derivatives.
First, this does not incorporate the commonly used symplectic formulations
of elastic mechanical systems, since in this case the Hamiltonian
functional depends on the \emph{strain}, which is the spatial derivative
of the position. In fact, such systems were expressed as boundary
port-Hamiltonian systems by \emph{extending} the state space to the
jet bundle on the positions \cite{LeGorrecSIAM05,MASCHKE_CPDE_2013,Brugnoli_AMM2019_I,Brugnoli_AMM2019_II}.
Second, this formulation also excludes non-local elasticity relations
as appearing e.g. in composite materials \cite{Zwart_MCMDS19_nonlocal}. 

In this paper, we shall extend the definition of boundary port-Hamiltonian
systems in order to address these two issues. This will be done by
starting from the definition of port-Hamiltonian
systems defined on Lagrangian subspaces \cite{SCL_18,schaft_ArXiv22_ImplPHSMonotoneDiss}
or Lagrangian submanifolds \cite{Schaft2020_DiracLagrangeNonlinear}
as recently introduced in the finite-dimensional case. In this definition, the energy is
no longer defined by a function on the state space, but instead by
reciprocal constitutive relations between the state and co-state variables.
Geometrically these relations define Lagrangian subspaces or submanifolds
of the co-tangent bundle of the state space \cite[p.414]{Abraham_marsden87}.
In this paper we shall extend this idea to the infinite-dimensional case.
We will restrict to \emph{linear} {boundary port-Hamiltonian systems defined
on one-dimensional spatial domains, thereby extending the setting
of \cite{LeGorrecSIAM05}. Some preliminary results (in restricted
contexts) have been presented already in \cite{schaft_LHMNLC21_DiffDiracOp,Maschke_IFAC_WC20_IPHS}.

The paper is structured as follows. In Section 2, we recall the definition
of boundary port-Hamiltonian systems defined with respect to Stokes-Dirac
structures, as given in \cite{schaftGeomPhys02,LeGorrecSIAM05}; however
using the algebraic framework of two-variable polynomial matrices
(see already \cite{schaft_LHMNLC21_DiffDiracOp}). Furthermore, we
shall motivate the developments of the paper by considering various
models of an elastic rod with local and non-local elasticity relations.
In Section 3, we develop the definition of Lagrangian subspaces associated
with reciprocal differential operators. We show that, for open systems,
one has to extend the Lagrangian subspaces to include boundary variables,
leading to the definition of Stokes-Lagrange subspaces. In Section
4, we define linear boundary port-Hamiltonian systems defined with
respect to these Stokes-Lagrange subspaces. Furthermore, we derive
a Hamiltonian functional, and show that this functional satisfies
a balance equation. This is illustrated with the examples
of the elastic rod in the symplectic formulation and the elastic rod
with non-local elasticity relations. 

\section{\label{sec:Reminder2PolynomialCalculus} Reminder on functionals,
differential operators and polynomial matrix calculus}

In this section we briefly recall from \cite{Willems_SIAM98,Schaft_SIAM11}
the relevant definitions and results on two-variable polynomial matrices
and the matrix differential operators associated with them. First we recall that a matrix differential operator in a variable $z \in \mathbb{R}$ is given by an expression
\begin{equation}
A(\frac{\partial}{\partial z}) = \sum_{k=0}^{N} A_{k}\frac{\partial^{k}}{\partial z^{k}}
\end{equation}
for $p \times q$ matrices $A_k$. If $A_N \neq 0$ then the integer $N$ is called the order of the matrix differential operator. Note that $A(\frac{\partial}{\partial z})$ maps any $C^{\infty}\left(\left[a,b\right],\mathbb{R}^{p}\right)$ function to an $C^{\infty}\left(\left[a,b\right],\mathbb{R}^{q}\right)$ function. 
Associated with $A(\frac{\partial}{\partial z})$ is the $p \times q$ polynomial matrix (in one variable $s$) 
\begin{equation}
A(s) = \sum_{k=0}^{N} A_{k}s^k
\end{equation}
The \emph{formal adjoint} of the $p \times q$ matrix differential operator $A(\frac{\partial}{\partial z}) = \sum_{k=0}^{N} A_{k}\frac{\partial^{k}}{\partial z^{k}}$ is the $q \times p$ matrix differential operator
\begin{equation}
A^*(\frac{\partial}{\partial z}) := \sum_{k=0}^{N} (-1)^kA^\top_{k}\frac{\partial^{k}}{\partial z^{k}},
\end{equation}
satisfying
\begin{equation}
\int_{a}^{b}e_{1}^{\top}(z) A^* e_{2}(z)dz=\int_{a}^{b}e_{2}^{\top}(z) Ae_{1}(z)dz
\end{equation}
for any pair $e_1\in C^{\infty}\left(\left[a,b\right],\mathbb{R}^{q}\right)$ and $e_2\in C^{\infty}\left(\left[a,b\right],\mathbb{R}^{p}\right)$
 with support strictly contained in the interval $[a,b]$. The $q \times p$ polynomial matrix associated with $A^*(\frac{\partial}{\partial z})$ is given by $A^\top(-s) = \sum_{k=0}^{N} A^\top_{k} (-s)^k$.\\
Finally, a $p \times p$ matrix differential operator is called \emph{formally self-adjoint} if $A(\frac{\partial}{\partial z})=  A^*(\frac{\partial}{\partial z})$, or equivalently $A(s)=A^\top(-s)$, and \emph{formally skew-adjoint} if $A(\frac{\partial}{\partial z})=  - A^*(\frac{\partial}{\partial z})$, or equivalently $A(s)=-A^\top(-s)$.

Next we consider \emph{two-variable} polynomial matrices and their corresponding bilinear differential operators.
\begin{definition}
A $p\times q$ \emph{two-variable polynomial matrix} $\Phi(\zeta,\eta)$ in the two indeterminates $\zeta$
and $\eta$ is an expression of the form 
\begin{equation}
\Phi(\zeta,\eta):=\sum_{k,l=0}^{M}\Phi_{k,l}\zeta^{k}\eta^{l}\label{eq:Def2PolynomialMatrices}
\end{equation}
with $M$ a nonnegative integer and $\Phi_{k,l}\in\mathbb{R}^{p\times q}$ $p\times q$ matrices.  
It is equivalently defined by its \textit{coefficient matrix} $\widetilde{\Phi}$ which
is the $Mp \times Mq$ matrix whose $(k,l)$-th block
is the matrix $\Phi_{k,l}$, $k,l=0,\ldots,M,$. The smallest $M$ such that $\Phi_{k,l}=0$ for all $k,l>M$ is called the \emph{degree} of $\Phi(\zeta,\eta)$.
\end{definition}

First, two-variable polynomial matrices can be used to define bilinear differential operators.
\begin{definition}
The \textit{bilinear differential operator}\footnote{Called a bilinear differential form in \cite{Willems_SIAM98}.}
$D_{\Phi}$ associated to the two-variable polynomial matrix $\Phi(\zeta,\eta)$
in (\ref{eq:Def2PolynomialMatrices}) is the operator 
\begin{equation}
D_{\Phi}(v,w)(z)=\sum_{k,l=0}^{M}\left[\frac{d^{k}}{dz^{k}}v(z)\right]^{\top}\Phi_{k,l}\frac{d^{l}}{dz^{l}}w(z),\label{eq:BilinearDiffOperator-1}
\end{equation}
acting on vector-valued $M$-differentiable functions $v:\mathbb{R}\to\mathbb{R}^{p},w:\mathbb{R}\to\mathbb{R}^{q}$.
This operator defines, in turn, a \emph{bilinear form} on the functions
with support being a bounded one-dimensional spatial domain $Z=\left[a,\,b\right]\ni z$,
where $a,\,b\in\mathbb{R}$ with $a<b$, given as 
\begin{equation}
\mathbf{D}_{\Phi}(v,w)=\int_{a}^{b}D_{\Phi}(v,w)(z)\,dz.\label{eq:BilinearDiffForm}
\end{equation}
\end{definition}

Second, we recall how two-variable polynomial matrices can be used
to express \emph{'integration by parts'}, or equivalently the product
rule of differentiation, in a compact way \cite[Section 2.2]{Schaft_SIAM11}.
These results will be employed in the sequel to define the boundary maps
for port-Hamiltonian systems. 
\begin{lemma}
\label{lem:Derivative-of-QuadraticDiffForm} The \emph{derivative
of the bilinear operator} $D_{\Psi}$ associated with the two-variable
polynomial matrix $\Psi(\zeta,\eta)$ is the bilinear differential
operator 
\begin{equation}
D_{\Phi}(v,w)(z) =\frac{d}{dz}\left(D_{\Psi}(v,w)\right)(z)\,,\label{eq:DerivativeDiffOperator}
\end{equation}
associated with the $p\times q$ two-variable polynomial matrix $\Phi(\zeta,\eta)$ given by 
\begin{equation}
\Phi(\zeta,\eta)=(\zeta+\eta)\Psi(\zeta,\eta)\label{eq:DerivativeOperatorCoeffcientMatrix}
\end{equation}
Conversely, a bilinear
operator $D_{\Phi}(v,w)(z)$ is the derivative of another bilinear differential operator if and only if the two-variable polynomial matrix $\Phi(\zeta,\eta)$ is divisible by $(\zeta+\eta)$ (i.e., \eqref{eq:DerivativeOperatorCoeffcientMatrix} holds for some $\Psi(\zeta,\eta)$).
\end{lemma}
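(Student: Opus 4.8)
The plan is to handle the two assertions of the lemma in turn: first the direct computation identifying the derivative of $D_{\Psi}$, and then the divisibility characterisation, which I expect to reduce to the forward computation together with one genuinely new ingredient, the injectivity of the assignment $\Phi\mapsto D_{\Phi}$.

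For the forward direction I would differentiate the defining expression \eqref{eq:BilinearDiffOperator-1} for $D_{\Psi}$ term by term and apply the Leibniz product rule to each summand. Differentiating a single term $\left[\frac{d^{k}}{dz^{k}}v\right]^{\top}\Psi_{k,l}\frac{d^{l}}{dz^{l}}w$ in $z$ produces exactly two contributions: one in which the order of differentiation of $v$ is raised by one, and one in which the order for $w$ is raised by one. Summing over $k,l$, the first family reassembles into $D_{\zeta\Psi}$ and the second into $D_{\eta\Psi}$, because multiplication of $\Psi$ by $\zeta$ (respectively $\eta$) is precisely the algebraic counterpart of raising the $v$-derivative (respectively $w$-derivative) index. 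Hence $\frac{d}{dz}D_{\Psi}=D_{\zeta\Psi}+D_{\eta\Psi}=D_{(\zeta+\eta)\Psi}$, which is exactly \eqref{eq:DerivativeDiffOperator}--\eqref{eq:DerivativeOperatorCoeffcientMatrix}. This step is routine if slightly bookkeeping-heavy; the only care needed is to track the index shifts consistently.

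For the converse, the ``if'' implication is immediate: if $\Phi=(\zeta+\eta)\Psi$ for some $\Psi$, then the forward direction already exhibits $D_{\Phi}=\frac{d}{dz}D_{\Psi}$ as a derivative. For the ``only if'' implication, suppose $D_{\Phi}=\frac{d}{dz}D_{\Xi}$ for some two-variable polynomial matrix $\Xi$. Applying the forward direction to $\Xi$ gives $D_{\Phi}=D_{(\zeta+\eta)\Xi}$, so it remains to deduce the algebraic identity $\Phi=(\zeta+\eta)\Xi$ from the equality of the two bilinear operators.

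The crux, and the step I expect to be the only real obstacle, is therefore the injectivity of the map $\Phi\mapsto D_{\Phi}$ from two-variable polynomial matrices to bilinear differential operators. I would establish this by observing that, at any fixed point $z_{0}$, the jet values $\frac{d^{k}}{dz^{k}}v(z_{0})$ for $k=0,\dots,M$ and $\frac{d^{l}}{dz^{l}}w(z_{0})$ for $l=0,\dots,M$ may be prescribed independently and arbitrarily, for instance by taking $v$ and $w$ to be vector-valued polynomials in $z-z_{0}$. Consequently $D_{\Phi}(v,w)(z_{0})$ is a genuine bilinear form in these $2(M+1)$ independent jet vectors, whose block coefficients are exactly the matrices $\Phi_{k,l}$; hence $D_{\Phi}\equiv 0$ forces every $\Phi_{k,l}=0$, so the map is injective. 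Applying this to $\Phi-(\zeta+\eta)\Xi$ yields $\Phi=(\zeta+\eta)\Xi$, i.e.\ $\Phi(\zeta,\eta)$ is divisible by $(\zeta+\eta)$, which completes the proof.
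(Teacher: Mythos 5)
Your proof is correct, and it follows the standard argument: the paper itself states this lemma without proof, recalling it from \cite[Sec.~3]{Willems_SIAM98} and \cite[Sec.~2.2]{Schaft_SIAM11}, where the forward direction is exactly the Leibniz-rule index-shift computation you give and the converse rests on the one-to-one correspondence between two-variable polynomial matrices and bilinear differential operators. You correctly identified that this injectivity (which you establish by prescribing arbitrary jets at a point) is the one genuinely needed ingredient beyond the direct computation.
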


 In integral form, one derives from \eqref{eq:DerivativeDiffOperator} the expression 
\begin{equation}
\mathbf{D}_{\Phi}(v,w)=D_{\Psi}(v,w)(b)-D_{\Psi}(v,w)(a)=:\left[D_{\Psi}(v,w)(z)\right]_{a}^{b}\label{eq:IntegrationbyParts}
\end{equation}
Thus the operator $D_{\Psi}$ defines the boundary terms ('remainders') obtained when 'integrating
by parts' the bilinear form $\mathbf{D}_{\Phi}$ in (\ref{eq:BilinearDiffForm}) with $D_{\Phi}$ as in \eqref{eq:BilinearDiffOperator-1}.

Third, we shall use the following basic observation concerning factorization of two-variable polynomial matrices into one-variable polynomial matrices}\cite[p. 1709]{Willems_SIAM98}, \cite[Proposition 2.3]{Schaft_SIAM11}. Consider the $p\times q$ two-variable polynomial matrix $\Phi(\zeta,\eta)$ with $Mp \times Mq$ coefficient matrix $\widetilde{\Phi}$. Consider any factorization $\widetilde{\Phi} = \widetilde{X}^\top \widetilde{Y}$ of $\widetilde{\Phi}$, where $\widetilde{X}$ is a $k \times Mp$ matrix and $\widetilde{Y}$ is a $k \times Mq$ matrix for some nonnegative integer $k$. Define the $k \times p$, respectively $k\times q$, one-variable polynomial matrices
\begin{equation}
X(s) := X_0 + X_1s + \cdots + X_Ms^M, \quad Y(s) := Y_0 + Y_1s + \cdots + Y_Ms^M,
\end{equation}
where 
\begin{equation}
\widetilde{X}= \begin{bmatrix} X_0 & X_1 & \cdots & X_M\end{bmatrix}, \quad  \widetilde{Y}= \begin{bmatrix} Y_0 & Y_1 & \cdots & Y_M\end{bmatrix}.
\end{equation}
Then it is immediately verified that $\widetilde{\Phi}(\zeta,\eta)=X^\top (\zeta)Y(\eta)$.

Factorizations which correspond to the minimal value $k=\textrm{rank}(\widetilde{\Phi})$
are called \textit{minimal}. They are unique up to premultiplication
by a constant nonsingular matrix.

Fourth, we recall the definition of the differential of a symmetric
bilinear functional form (\ref{eq:BilinearDiffForm}) \cite[section 3]{Willems_SIAM98}.
A $p\times p$ two-variable polynomial matrix $H(\zeta,\eta)$ is
called \textit{symmetric} if $H(\zeta,\eta)=H^{\top}(\eta,\zeta)$,
or equivalently its coefficient matrix $\widetilde{H}$ is symmetric.
A symmetric two-variable polynomial matrix $H(\zeta,\eta)$ defines
the \emph{quadratic differential operator}\footnote{Called a quadratic differential form in \cite{Willems_SIAM98}.} $D_{H}(v,v)(z)$ acting
on vector-valued $M$-differentiable functions $v:\mathbb{R}\to\mathbb{R}^{p}$. Note that for any such function $v$ the expression
\begin{equation}
\mathfrak{H}\left(v\right):=\frac{1}{2}\mathbf{D}_{H}(v,v)\label{eq:QuadraticForm}
\end{equation}
is a \emph{quadratic functional}. Using the bilinearity and symmetry
of $D_{H}$, the following proposition may
be proven easily. 
\begin{lemma} \label{lem:DiffQuadFunctPolynom}
Consider the \emph{quadratic functional} $\mathfrak{H}\left(v\right)$
in (\ref{eq:QuadraticForm}), defined by a symmetric two-variable
polynomial matrix $H(\zeta,\eta)$. For any $\varepsilon\in\mathbb{R}$
and any $M$-differentiable function $\delta v:\mathbb{R}\to\mathbb{R}^{p}$
\begin{equation}
\mathfrak{H}(v+\varepsilon\delta v)=\mathfrak{H}(v)+\varepsilon\int_{a}^{b}D_{H}\left(v,\delta v\right)dz+O\left(\varepsilon^{2}\right).\label{eq:DifferentialQuadraticForm}
\end{equation}
Hence $D_{H}\left(v,.\right):\delta v\mapsto D_{H}\left(v,\delta v\right)$
is the \emph{differential} of the quadratic functional $\mathfrak{H}\left(v\right)$. 
\end{lemma}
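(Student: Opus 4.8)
The plan is to expand $\mathfrak{H}(v+\varepsilon\delta v)$ directly and collect terms by powers of $\varepsilon$, exploiting the bilinearity of $\mathbf{D}_{H}$ in its two arguments (inherited pointwise from $D_{H}$, which is manifestly bilinear in $(v,w)$ from its defining formula \eqref{eq:BilinearDiffOperator-1}). Since $\mathfrak{H}(v)=\tfrac12\mathbf{D}_{H}(v,v)$, substituting $v+\varepsilon\delta v$ into both slots and using bilinearity yields
\begin{equation}
\mathfrak{H}(v+\varepsilon\delta v)=\tfrac12\mathbf{D}_{H}(v,v)+\tfrac{\varepsilon}{2}\bigl[\mathbf{D}_{H}(v,\delta v)+\mathbf{D}_{H}(\delta v,v)\bigr]+\tfrac{\varepsilon^{2}}{2}\mathbf{D}_{H}(\delta v,\delta v).
\end{equation}
The first term is $\mathfrak{H}(v)$ and the last term is $O(\varepsilon^{2})$, so everything reduces to showing that the coefficient of $\varepsilon$ equals $\int_{a}^{b}D_{H}(v,\delta v)\,dz$.

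The one nontrivial point -- and where the symmetry hypothesis on $H$ enters -- is to verify that the two cross terms coincide, i.e. $\mathbf{D}_{H}(\delta v,v)=\mathbf{D}_{H}(v,\delta v)$. I would establish this at the pointwise (integrand) level. Since $D_{H}(\delta v,v)(z)$ is a scalar it equals its own transpose; transposing the defining sum and relabelling the summation indices $k\leftrightarrow l$ gives
\begin{equation}
D_{H}(\delta v,v)(z)=\sum_{k,l=0}^{M}\Bigl[\tfrac{d^{k}}{dz^{k}}v\Bigr]^{\top}H_{l,k}^{\top}\tfrac{d^{l}}{dz^{l}}\delta v.
\end{equation}
The symmetry condition $H(\zeta,\eta)=H^{\top}(\eta,\zeta)$ is equivalent, at the level of coefficients, to $H_{l,k}^{\top}=H_{k,l}$; substituting this identity returns exactly $D_{H}(v,\delta v)(z)$. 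Integrating over $[a,b]$ then gives $\mathbf{D}_{H}(\delta v,v)=\mathbf{D}_{H}(v,\delta v)$.

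Combining the two steps, the coefficient of $\varepsilon$ becomes $\tfrac12\cdot 2\,\mathbf{D}_{H}(v,\delta v)=\int_{a}^{b}D_{H}(v,\delta v)\,dz$, which is precisely \eqref{eq:DifferentialQuadraticForm}. Finally, since this term is linear in $\delta v$ and constitutes the first-order part of the expansion in $\varepsilon$, it is by definition the (G\^ateaux) differential of $\mathfrak{H}$ at $v$, identifying the map $\delta v\mapsto D_{H}(v,\delta v)$ as asserted.

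I do not anticipate any genuine obstacle: the entire argument is the polarization identity for a symmetric bilinear form, and the only place requiring care is the index bookkeeping in the symmetry step above, where one must correctly translate the polynomial-matrix symmetry $H(\zeta,\eta)=H^{\top}(\eta,\zeta)$ into the coefficient identity $H_{k,l}=H_{l,k}^{\top}$ before cancelling the two cross terms.
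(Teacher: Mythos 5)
Your proof is correct and follows exactly the route the paper intends: the paper gives no written proof, stating only that the lemma ``may be proven easily'' using the bilinearity and symmetry of $D_{H}$, and your expansion plus the coefficient identity $H_{k,l}=H_{l,k}^{\top}$ is precisely that argument carried out in full.
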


The differential of the quadratic functional $\mathfrak{H}\left(v\right)$
is related to its variational derivative \footnote{The \emph{variational derivative} $\frac{\delta\mathfrak{H}}{\delta v}\left(v\right)$ of $\mathfrak{H}\left(v\right)$ is defined as the differential operator acting on $v$ such that 
\begin{equation}
\mathfrak{H}(v+\varepsilon\delta v)=\mathfrak{H}(v)+\varepsilon\int_{a}^{b} \delta v^\top \,\frac{\delta\mathfrak{H}}{\delta v}\left(v\right)\,dz+O\left(\varepsilon^{2}\right)\label{eq:VariationalDerivative}
\end{equation}
} \cite[page 249]{Olver93} in the following way. Repeated integration
by parts for functions \emph{with compact support strictly included in $[a, b]$}, yields 
\begin{equation}
\int_{a}^{b}D_{H}\left(v,\delta v\right)dz=\int_{a}^{b} \delta v^\top \,Q(\frac{d}{dz})(v)\,dz\label{eq:Differential_VariationalDerivative_Relation}
\end{equation}
where 
\begin{equation}
Q(s):=H(-s,s)\label{eq:VariationalDer_Polynomial}
\end{equation}
Thus the variational derivative of $\mathfrak{H}\left(v\right)$ is
given by $\frac{\delta\mathfrak{H}}{\delta v}\left(v\right)=Q(\frac{d}{dz})\left(v\right)$.
Since $H(\zeta,\eta)=H^\top (\eta,\zeta)$ one has $Q(s)=Q^{\top}(-s)$, and thus the differential operator $Q(\frac{d}{dz})$ is formally self-adjoint.
\begin{note}
For \emph{general functions, not vanishing at the boundary of the spatial domain,} integration by parts yields
\begin{equation}
\int_{a}^{b}D_{H}\left(v,\delta v\right)dz=\int_{a}^{b} \delta v^\top \,Q(\frac{d}{dz})\left(v\right)\,dz+\left[\sum_{k,l=0}^{M}\left[\frac{d^{k}}{dz^{k}}v\right]^{\top}H_{k,l}^{b}\left[\frac{d^{l}}{dz^{l}}\delta v\right]\right]_{a}^{b}\label{Differential_VariationalDerivative_BoundaryTerm}
\end{equation}
and the two-variable polynomial equation expressing integration by
parts \textit{with} boundary terms ('remainders') becomes 
\[
H(\zeta,\eta)-Q(\eta)=(\zeta+\eta)H^{b}(\zeta,\eta)
\]
for some (usually \textit{not} symmetric) two-variable polynomial
matrix $H^{b}(\zeta,\eta)$. 
\end{note}

\section{\label{sec:LinearBPHS_Motivation} Linear boundary port-Hamiltonian
systems and motivating examples}

This section is composed of two parts. In the first part, we shall
briefly recall the definition of linear boundary port-Hamiltonian
systems defined on one-dimensional spatial domains. This class of
systems was defined in \cite{LeGorrecSIAM05}, extending \cite{schaftGeomPhys02},
and has been the topic of numerous work on their analysis and control \cite{JacobZwart12,Macchelli_2021OverviewControlBPHS}. However,
we shall present them in a slightly different way, using the algebraic framework
of two-variable polynomial matrices recalled in Section \ref{sec:Reminder2PolynomialCalculus}.
In the second part of the section, we shall motivate the main idea
of this paper, i.e., the extension from Hamiltonian functionals to
Lagrangian subspaces, on the basis of different formulations of the
model of an elastic rod. We shall recall why the symplectic formulation
of the elastic rod \cite[chap.4]{Geoplex09} does not correspond to
the definition of linear boundary port-Hamiltonian systems as defined
in \cite{schaftGeomPhys02,LeGorrecSIAM05}. Then we illustrate how
the elastic rod may be expressed in terms of linear boundary port-Hamiltonian
systems, using a prolongation of the state space. Finally for elastic
rods with a \emph{non-local} elasticity relation, we recall how one
obtains a descriptor representation \cite{Zwart_MCMDS19_nonlocal}
which also is not encompassed by the standard definition of linear
boundary port-Hamiltonian systems.

\subsection{\label{subsec:Boundary-Port-HamiltonianSyst} Linear boundary port-Hamiltonian
systems}

In this paragraph, we recall from \cite{schaftGeomPhys02, LeGorrecSIAM05} the definition of \emph{linear
boundary port-Hamiltonian systems} on a bounded one-dimensional spatial domain $Z=\left[a,\,b\right]\ni z$,
$a,\,b\in\mathbb{R}$ and $a<b$ ; however using
the framework of two-variable polynomial matrices.

\subsubsection{Hamiltonian systems}

We consider \emph{Hamiltonian systems} \cite[chap.7]{Olver93} defined
with respect to a Hamiltonian operator $\mathcal{J}\left(\frac{\partial}{\partial z}\right)$, which is a
formally skew-symmetric $N$th-order matrix differential operator
\begin{equation}
\mathcal{J}(\frac{\partial}{\partial z})=\sum_{k=0}^{N}J_{k}\frac{\partial^{k}}{\partial z^{k}}\label{eq:N_OrderLinearHamOp}
\end{equation}
where $J_{k}\in\mathbb{R}^{n\times n}$ is symmetric if $k$ is odd
and skew-symmetric if $k$ is
even\footnote{The operator $\mathcal{J}$ is called a \emph{Hamiltonian operator} since
it is formally skew-adjoint and, being defined by constant coefficient
matrices, also satisfies the Jacobi identity \cite[Corollary 7.5, p. 429]{Olver93}.}, and generated by the Hamiltonian functional 
\begin{equation}
\mathfrak{H}\left[x\right]=\int_{a}^{b}\frac{1}{2}x\left(z\right){}^{\top}Q_{0}\,x\left(z\right)\;dz\label{eq:EnergyFunctionQuadratic},
\end{equation}
where $Q_{0}$ is a constant symmetric matrix and $x\in L^{2}\left(\left[a,b\right],\mathbb{R}^{n}\right)$
is the vector of state variables. Note that, in view of the physical
interpretation of these models, we will also call the state variables
the \emph{energy variables} and the Hamiltonian functional the \emph{energy}.
The resulting Hamiltonian system is then defined by the partial differential
equation (PDE) 
\begin{equation}
\frac{\partial x}{\partial t}=\mathcal{J}(\frac{\partial}{\partial z})\,Q_{0}\,x\label{eq:LinearHamiltonianSys}
\end{equation}
Indeed, this PDE may be written as 
\begin{equation}
\frac{\partial x}{\partial t}=\mathcal{J}(\frac{\partial}{\partial z})\,\frac{\delta H}{\delta x}\label{eq:HamiltonianSystem}
\end{equation}
where the \emph{variational derivative (\ref{eq:VariationalDerivative})}
(also called \emph{functional derivative}) of the Hamiltonian functional
(\ref{eq:EnergyFunctionQuadratic}) is the \emph{co-energy variable}
\begin{equation}
e=\frac{\delta H}{\delta x}=Q_{0}\,x\label{eq:CoenergyExplicitLinear}
\end{equation}

In order to define \emph{port variables} describing the interaction
of the system with its environment through the boundary of the spatial
domain, these Hamiltonian systems have been extended to \emph{boundary
port-Hamiltonian systems} defined with respect to a Dirac structure
uniquely associated with the Hamiltonian operator (\ref{eq:N_OrderLinearHamOp})
and called \emph{Stokes-Dirac structure} \cite{schaftGeomPhys02,LeGorrecSIAM05}.

\subsubsection{Construction of the boundary port variables associated with the Hamiltonian
operator $\mathcal{J}$}

Let us now recall the definition of \emph{Stokes-Dirac structures}
for a Hamiltonian operator $\mathcal{J}$ as in (\ref{eq:N_OrderLinearHamOp}), slightly departing
from the description in \cite{schaftGeomPhys02,LeGorrecSIAM05} by using the approach
presented in \cite{schaft_LHMNLC21_DiffDiracOp}.

Define the following two-variable polynomial matrix $\Phi(\zeta,\eta)$ based on $\mathcal{J}(s)$:
\begin{equation}
\Phi(\zeta, \eta):= \mathcal{J}(\eta) + \mathcal{J}^\top (\zeta)
\label{eq:2PolynomMatrix_J}
\end{equation}
Clearly, $\Phi(\zeta,\eta)$ is symmetric, while furthermore, since $\mathcal{J}(s)=-\mathcal{J}^\top (-s)$, it vanishes for $\zeta + \eta=0$. Hence
\begin{equation}
\Phi(\zeta, \eta) = (\zeta + \eta) \Psi(\zeta, \eta)
\end{equation}
for some symmetric two-variable polynomial matrix $\Psi(\zeta, \eta)$.

Next, consider a minimal factorization of $\Psi\left(\zeta,\eta\right)$
\begin{equation}
\Psi\left(\zeta,\eta\right)=T\left(\zeta\right)^{\top}\Sigma \; T\left(\eta\right)\label{eq:Boundary2Polynomial_J_Factorization}
\end{equation}
where $\Sigma$ is a full-rank $\delta\times\delta$ symmetric matrix and $T\left(s\right)$ is a $\delta\times n$ polynomial
matrix. Since $\Sigma$ is symmetric, one may always choose $T\left(s\right)$
in such a way that $\Sigma$ becomes the signature matrix 
\begin{equation}
\Sigma=\left(\begin{array}{cc}
I_{\alpha} & 0_{\alpha\times\beta}\\
0_{\beta\times\alpha} & -I_{\beta},
\end{array}\right)\label{eq:SignatureMatrix}
\end{equation}
where $I_{k}$ denotes the identity matrix of dimension $k$, and
$\alpha$, $\beta$ are two integers such that $\alpha+\beta=\delta$.

The boundary term as in (\ref{eq:IntegrationbyParts}) is then written
as 
\begin{equation}
\left[D_{\Psi}\left(e_{1},e_{2}\right)\right]_{a}^{b}=\mathrm{tr}\left(T\left(\frac{\partial}{\partial z}\right)e_{1}\right)^{\top}\left(\begin{array}{cc}
\Sigma & 0_{\delta\times\delta}\\
0_{\delta\times\delta} & -\Sigma
\end{array}\right)\mathrm{tr}\left(T\left(\frac{\partial}{\partial z}\right)e_{2}\right)\label{eq:BoundaryTerm_J}
\end{equation}
where $\mathrm{tr}$ denotes the trace operator: 

\begin{equation} \label{eq:TraceOperatorDef}
\mathrm{tr}\left(\varphi\right)=\left(\begin{array}{c}
\varphi\left(a\right)\\
\varphi\left(b\right)
\end{array}\right)
\end{equation}

In the sequel, we shall use the polynomial matrix $T\left(s\right)$
to define the boundary port variables $\left(f_{\partial},\,e_{\partial}\right)\in\mathbb{R}^{\delta}\times\mathbb{R}^{\delta}$
as follows 
\begin{equation}
\left(\begin{array}{c}
f_{\partial}\\
e_{\partial}
\end{array}\right)=\frac{1}{\sqrt{2}}\left(\begin{array}{cc}
\Sigma & \Sigma\\
\Sigma & -\Sigma
\end{array}\right)\mathrm{tr}\left(T\left(\frac{\partial}{\partial z}\right)e\right)\label{eq:BoundaryPortVariablesSDS}
\end{equation}

A simple calculation, using $\Sigma^{2}=I_{\delta}$, shows
that the boundary term has the following expression in terms of the
boundary port variables (\ref{eq:BoundaryPortVariablesSDS}) associated
with $e_{1}$ and $e_{2}$ 
\begin{equation}
\left[D_{\Psi}\left(e_{1},e_{2}\right)\right]_{a}^{b}=\left(\begin{array}{c}
f_{\partial_{2}}\\
e_{\partial_{2}}
\end{array}\right)^{\top}\left(\begin{array}{cc}
0_{\delta\times\delta} & I_{\delta}\\
I_{\delta} & 0_{\delta\times\delta}
\end{array}\right)\left(\begin{array}{c}
f_{\partial_{1}}\\
e_{\partial_{1}}
\end{array}\right)=f_{\partial_{2}}^{\top}e_{\partial_{1}}+e_{\partial_{2}}^{\top}f_{\partial_{1}}\label{eq:BoundaryTermPortBoundaryVariables}
\end{equation}

\begin{remark}
This definition is equivalent to the definition given in \cite[p.159]{VillegasPhD07,LeGorrecSIAM05}.
Of course the definition of port variables satisfying (\ref{eq:BoundaryTermPortBoundaryVariables})
is not unique. Indeed, a more special definition of the boundary port
variables has been given in \cite{schaft_LHMNLC21_DiffDiracOp}, under
the condition that the matrix $\Sigma$ in (\ref{eq:Boundary2Polynomial_J_Factorization})
has equal number of positive and negative eigenvalues and hence is
equivalent to the signature matrix (\ref{eq:SignatureMatrix}) with
$\alpha=\beta$ (and $\delta=2\alpha$). In fact \cite{trentelman}, this is the case if and only if the coefficient matrix of $\Psi$ has as many positive as negative eigenvalues. In this situation, one may define the
boundary port variables as 
\begin{equation}
\left(\begin{array}{c}
f_{\partial}\\
e_{\partial}
\end{array}\right)=\frac{1}{\sqrt{2}}\left(\begin{array}{cc}
\begin{array}{cc}
I_{\alpha} & I_{\alpha}\\
0_{\alpha} & 0_{\alpha}
\end{array} & \begin{array}{cc}
0_{\alpha\times\alpha} & 0_{\alpha\times\alpha}\\
I_{\alpha} & I_{\alpha}
\end{array}\\
\begin{array}{cc}
I_{\alpha} & -I_{\alpha}\\
0_{\alpha\times\alpha} & 0_{\alpha\times\alpha}
\end{array} & \begin{array}{cc}
0_{\alpha\times\alpha} & 0_{\alpha\times\alpha}\\
I_{\alpha} & -I_{\alpha}
\end{array}
\end{array}\right)\mathrm{tr}\left(T\left(\frac{\partial}{\partial z}\right)e\right)\label{eq:BoundaryPortVariablesSDS_even}
\end{equation}
obviously satisfying (\ref{eq:BoundaryTermPortBoundaryVariables}).
Denoting the first $\alpha$ components of the flow variable $f_{\partial}$
by $f_{\partial}^{a}$ (since they depend on $T\left(\frac{\partial}{\partial z}\right)e\left(a\right)$)
and the last $\alpha$ components by $f_{\partial}^{b}$ (since they
depend on $T\left(\frac{\partial}{\partial z}\right)e\left(b\right)$)
and using a similar notation for the effort variables $e$, the product
of the effort and flow variable may be split into two parts associated
with the boundary points $a$ and $b$ only: $f_{\partial_{2}}^{\top}e_{\partial_{1}}=f_{\partial_{2}}^{a\top}e_{\partial_{1}}^{a}+f_{\partial_{2}}^{b\top}e_{\partial_{1}}^{b}$.
This splitting corresponds to a splitting of the vector $T\left(\frac{\partial}{\partial z}\right)e$ into flow and effort
variables,
in contrast to (\ref{eq:BoundaryPortVariablesSDS}) where the boundary
port variables are defined as a \emph{linear combination of the values}
of $T\left(\frac{\partial}{\partial z}\right)e$ at \emph{both} boundary
points $a$ and $b$ of the spatial domain. 

On the other hand, let us consider
the simplest example of an \emph{odd-dimensional} matrix $\Sigma$, namely
the case of a scalar shift operator $\mathcal{J}=\frac{\partial}{\partial z}$.
In this case, $\left[D_{\Psi}\left(e_{1},e_{2}\right)\right]_{a}^{b}=e_{1}\left(b\right)e_{2}\left(b\right)-e_{1}\left(a\right)e_{2}\left(a\right)$. Hence $\Sigma=1$ and $T=1$, while the effort variable $e\left(z\right)$
is scalar and cannot be decomposed into a pair of flow and effort
variables. Hence the general Definition (\ref{eq:BoundaryPortVariablesSDS}) has
to be used, leading to $\left(\begin{array}{c}
f_{\partial}\\
e_{\partial}
\end{array}\right)=\frac{1}{\sqrt{2}}\left(\begin{array}{c}
e\left(b\right)+e\left(a\right)\\
e\left(b\right)-e\left(a\right)
\end{array}\right)$ . 
\end{remark}

\subsubsection{Stokes-Dirac structures}

Let us now recall the definition of the Stokes-Dirac structure associated
with the Hamiltonian operator $\mathcal{J}$ in (\ref{eq:N_OrderLinearHamOp}). 

\begin{proposition}
\cite[p.159]{VillegasPhD07}\label{prop:StokesDiracStrucNorderLinearOrderOp}Consider
the flow space $\mathscr{F}=L^{2}\left(\left[a,\,b\right],\,\mathbb{R}^{n}\right)\times\mathbb{R}^{p}\ni\left(f^{i},\,f_{\partial}^{i}\right)$
and the effort space being is its dual $\mathscr{E}=\mathscr{F}^{*}\sim L^{2}\left(\left[a,\,b\right],\,\mathbb{R}^{n}\right)\times\mathbb{R}^{p}\ni\left(e^{i},\,e_{\partial}^{i}\right)$. The bond space $\mathscr{B}=\mathscr{F}\times\mathscr{E}$ is
equipped with the symmetric pairing 
\begin{equation}
\left\langle \left(\begin{array}{c}
f\\
f_{\partial}
\end{array}\right),\left(\begin{array}{c}
e\\
e_{\partial}
\end{array}\right) \right\rangle_\mathscr{B}
=
\left\langle f,e\right\rangle _{L^{2}}-\left\langle f_{\partial},e_{\partial}\right\rangle _{\mathbb{R}^{n}}
\end{equation}
where $\left\langle ,\,\right\rangle _{L^{2}}$ denotes the $L^2$ inner product and $\left\langle ,\,\right\rangle _{\mathbb{R}^{n}}$
the Euclidean inner product.\\
Consider a Hamiltonian operator $\mathcal{J}$ defined in (\ref{eq:N_OrderLinearHamOp})
and the associated polynomial matrix $T\left(\zeta\right)$ in (\ref{eq:Boundary2Polynomial_J_Factorization}) 
and $\Sigma$ being (\ref{eq:SignatureMatrix}). The subspace 
\begin{eqnarray}
\mathscr{D}_{\mathcal{J}} & = & \left\{ \left(\left(\begin{array}{c}
f\\
f_{\partial}
\end{array}\right),\left(\begin{array}{c}
e\\
e_{\partial}
\end{array}\right)\right)\in\mathscr{B}\mid e\in H^{N}\left(\left[a,\,b\right],\,\mathbb{R}^{n}\right),\right.\nonumber \\
 &  & \left.f=\mathcal{J}e,\;\left(\begin{array}{c}
f_{\partial}\\
e_{\partial}
\end{array}\right)=\frac{1}{\sqrt{2}}\left(\begin{array}{cc}
\Sigma & \Sigma\\
\Sigma & -\Sigma
\end{array}\right)\mathrm{tr}\left(T\left(\frac{\partial}{\partial z}\right)e\right)\right\} \label{eq:StokesDiracConstEq}
\end{eqnarray}
is a Dirac structure, called \emph{Stokes-Dirac structure}, with respect to the symmetric pairing (\ref{eq:SymmPairingBoundary})
\begin{eqnarray}
\ll \left(
 \left(\begin{array}{c}
f_{1}\\
f_{\partial_{1}}
\end{array}\right),\left(\begin{array}{c}
e_{1}\\
e_{\partial_{1}}
\end{array}\right)
\right)
,\,
\left(
\left(\begin{array}{c}
f_{2}\\
f_{\partial_{2}}
\end{array}\right),\left(\begin{array}{c}
e_{2}\\
e_{\partial_{2}}
\end{array}\right)
\right)
\gg_{+} & =  & \left\langle \left(\begin{array}{c}
f_{1}\\
f_{\partial_{1}}
\end{array}\right),\left(\begin{array}{c}
e_{2}\\
e_{\partial_{2}}
\end{array}\right) \right\rangle_\mathscr{B}\nonumber \\
 &  & + \left\langle \left(\begin{array}{c}
f_{2}\\
f_{\partial_{2}}
\end{array}\right),\left(\begin{array}{c}
e_{1}\\
e_{\partial_{1}}
\end{array}\right) \right\rangle_\mathscr{B}
\label{eq:SymmPairingBoundary}
\end{eqnarray}
where $\left(f_{i},\,f_{\partial_{i}},\,e_{i},\,e_{\partial_{i}}\right)\in\mathscr{B}; \quad i\in\left\{ 1,\,2\right\} $.
\end{proposition}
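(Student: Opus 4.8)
The plan is to show that $\mathscr{D}_{\mathcal{J}}$ equals its orthogonal companion $\mathscr{D}_{\mathcal{J}}^{\perp}$ with respect to the symmetric pairing $\ll\cdot,\cdot\gg_{+}$ of \eqref{eq:SymmPairingBoundary}, which is the defining property of a Dirac structure. The computational backbone of both inclusions is the \emph{Stokes identity}: for any $e,e'\in H^{N}([a,b],\mathbb{R}^{n})$,
\begin{equation}
\langle \mathcal{J}e,e'\rangle_{L^{2}}+\langle \mathcal{J}e',e\rangle_{L^{2}}=\left[D_{\Psi}(e,e')\right]_{a}^{b}=(f'_{\partial})^{\top}e_{\partial}+(e'_{\partial})^{\top}f_{\partial},
\end{equation}
where $(f_{\partial},e_{\partial})$ and $(f'_{\partial},e'_{\partial})$ are the boundary port variables \eqref{eq:BoundaryPortVariablesSDS} associated with $e$ and $e'$. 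The first equality follows by observing that the two-variable polynomial matrix $\Phi(\zeta,\eta)=\mathcal{J}(\eta)+\mathcal{J}^{\top}(\zeta)$ of \eqref{eq:2PolynomMatrix_J} has bilinear operator $D_{\Phi}(e,e')(z)=e^{\top}\mathcal{J}e'+(\mathcal{J}e)^{\top}e'$, so that $\mathbf{D}_{\Phi}(e,e')$ is precisely the left-hand side, and then applying Lemma~\ref{lem:Derivative-of-QuadraticDiffForm} together with \eqref{eq:IntegrationbyParts}; the second equality is exactly \eqref{eq:BoundaryTermPortBoundaryVariables}.

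\textbf{Isotropy.} First I would verify $\mathscr{D}_{\mathcal{J}}\subseteq\mathscr{D}_{\mathcal{J}}^{\perp}$. Taking two elements with $f_{i}=\mathcal{J}e_{i}$ and boundary variables as in \eqref{eq:StokesDiracConstEq}, the pairing \eqref{eq:SymmPairingBoundary} expands to $\langle f_{1},e_{2}\rangle_{L^{2}}+\langle f_{2},e_{1}\rangle_{L^{2}}-\langle f_{\partial_{1}},e_{\partial_{2}}\rangle-\langle f_{\partial_{2}},e_{\partial_{1}}\rangle$. Substituting $f_{i}=\mathcal{J}e_{i}$ and invoking the Stokes identity converts the two $L^{2}$ terms into $f_{\partial_{2}}^{\top}e_{\partial_{1}}+e_{\partial_{2}}^{\top}f_{\partial_{1}}$, which cancels the two boundary terms exactly, yielding zero. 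This step is purely algebraic once the Stokes identity is available.

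\textbf{Maximality.} The substantial direction is $\mathscr{D}_{\mathcal{J}}^{\perp}\subseteq\mathscr{D}_{\mathcal{J}}$. Let $((f,f_{\partial}),(e,e_{\partial}))$ pair to zero against every element of $\mathscr{D}_{\mathcal{J}}$. I would first test against those $e'$ whose support is compactly contained in $(a,b)$, for which all boundary variables vanish; the pairing condition then reduces to $\langle f,e'\rangle_{L^{2}}+\langle \mathcal{J}e',e\rangle_{L^{2}}=0$ for all such $e'$, which asserts that $f=\mathcal{J}e$ in the distributional sense (using the formal skew-adjointness $\mathcal{J}^{*}=-\mathcal{J}$). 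Since $f\in L^{2}$, a regularity argument based on the structure of $\mathcal{J}$ upgrades this to $e\in H^{N}$ with $f=\mathcal{J}e$ in $L^{2}$. I would then test against arbitrary $e'\in H^{N}$: using $f=\mathcal{J}e$ and the Stokes identity, the pairing condition collapses to $(f'_{\partial})^{\top}(\hat{e}_{\partial}-e_{\partial})+(e'_{\partial})^{\top}(\hat{f}_{\partial}-f_{\partial})=0$, where $(\hat{f}_{\partial},\hat{e}_{\partial})$ are the boundary port variables generated by $e$ via \eqref{eq:BoundaryPortVariablesSDS}. Because the boundary map $e'\mapsto\mathrm{tr}(T(\tfrac{\partial}{\partial z})e')$ is surjective and the linear transformation in \eqref{eq:BoundaryPortVariablesSDS} is invertible (as $\Sigma^{2}=I_{\delta}$), the pair $(f'_{\partial},e'_{\partial})$ ranges over all of $\mathbb{R}^{\delta}\times\mathbb{R}^{\delta}$, forcing $e_{\partial}=\hat{e}_{\partial}$ and $f_{\partial}=\hat{f}_{\partial}$; hence the element lies in $\mathscr{D}_{\mathcal{J}}$.

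\textbf{Main obstacle.} I expect the two genuinely analytic points to be the bottleneck. The first is the elliptic-type bootstrap showing that $e\in L^{2}$ with $\mathcal{J}e\in L^{2}$ already forces $e\in H^{N}$; this relies on non-degeneracy of the leading coefficient $J_{N}$ and requires care (or a suitable refinement of the statement) when $\mathcal{J}$ is degenerate. The second is the surjectivity of the boundary trace $e'\mapsto\mathrm{tr}(T(\tfrac{\partial}{\partial z})e')$ onto $\mathbb{R}^{\delta}\times\mathbb{R}^{\delta}$, which I would derive from the minimality of the factorization \eqref{eq:Boundary2Polynomial_J_Factorization}, guaranteeing that $T(s)$ has full row rank $\delta$, by constructing smooth functions with independently prescribed jets at $a$ and $b$. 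All remaining manipulations are routine once the Stokes identity and these two facts are established.
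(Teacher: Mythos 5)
Your proposal is correct and follows essentially the same route as the paper: isotropy by the integration-by-parts (Stokes) identity for the factored two-variable polynomial matrix, and co-isotropy by first testing against compactly supported efforts to force $f=\mathcal{J}e$ and then using minimality/full rank of $T(s)$ to recover the boundary relations. The only difference is cosmetic — you verify isotropy on pairs rather than via the quadratic-form polarization the paper cites, and you flag explicitly the regularity bootstrap and trace-surjectivity steps that the paper defers to \cite{LeGorrecSIAM05}.
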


Before recalling the sketch of the proof, let us recall the general
definition of a Dirac structure. 
\begin{definition}
\cite{Courant88}\label{def:Dirac_structure} {[}Dirac structure{]}
Let $\mathcal{F}$ and $\mathcal{E}$ be two real vector spaces endowed with a non-degenerate bilinear form,
called \emph{pairing}, denoted by: 
\begin{equation}
\begin{array}{rcl}
\left\langle .|\,.\right\rangle :\,\mathcal{F}\times\mathcal{E} & \rightarrow & \mathbb{R}\\
\left(f,\,e\right) & \mapsto & \left\langle e,\,f\right\rangle 
\end{array}\label{eq:BilinearPairing}
\end{equation}

On the product space, called bond space $\mathcal{B}=\mathcal{F}\times\mathcal{E}$, consider the symmetric bilinear form, called \emph{plus pairing
}, defined as
\begin{equation}
\begin{array}{rcl}
\ll\cdot,\cdot\gg_{+}:\mathcal{B}\times\mathcal{B} & \rightarrow & \mathbb{R}\\
\left(\left(f_{1},\,e_{1}\right),\,\left(f,_{2}\,e_{2}\right)\right) & \mapsto & {\ll(f_{1},e_{1}),(f_{2},e_{2})\gg_{+}}:=\langle e_{1},\,f_{2}\rangle+\langle e_{2},\,f_{1}\rangle
\end{array}\label{eq:SymmetricProduct}
\end{equation}
A \emph{Dirac structure} is a subspace $\mathcal{D}\subset\mathcal{B}$
such that $\mathcal{D}=\mathcal{D}^{\bot}$, with $\perp$ denoting
the orthogonal complement with respect to the plus pairing $\ll,\gg_{+}$. 
\end{definition}

\begin{proof}
Let us first prove that $\mathscr{D}_{\mathcal{J}}\subset\mathscr{D}_{\mathcal{J}}^{\bot}$.
Recall \cite{schaftGeomPhys02} that this isotropy condition is equivalent to 
\begin{equation}
\ll(f,e),(f,e)\gg_{+}=0\qquad\forall\left(f,\,e\right)\in\mathcal{B}\label{eq:IsotropyCond_StokesDirac}
\end{equation}
In order to prove this, take any element $\left(\left(\begin{array}{c}
f\\
f_{\partial}
\end{array}\right),\left(\begin{array}{c}
e\\
e_{\partial}
\end{array}\right)\right)\in\mathscr{D}_{\mathcal{J}}$ and compute, using the definition of the Stokes-Dirac structure (\ref{eq:StokesDiracConstEq}),
 the definition of the port variables (\ref{eq:BoundaryPortVariablesSDS}),
the quadratic bilinear operator associated with the $2$-polynomial matrix (\ref{eq:2PolynomMatrix_J}),
the boundary term (\ref{eq:BoundaryTerm_J}) and the equation (\ref{eq:IntegrationbyParts})
\begin{eqnarray}
\ll(f,e),(f,e)\gg_{+} & = & 2\left(\left\langle f,e\right\rangle _{L^{2}}-\left\langle f_{\partial},e_{\partial}\right\rangle _{\mathbb{R}^{n}}\right)\label{eq:IsotropyCond_StokesDirac_expanded}\\
 & = & \int_{a}^{b}D_{\Phi}\left(e_{1},e_{2}\right)dz-\left[D_{\Psi}(v,w)(z)\right]_{a}^{b}\nonumber \\
 & = & 0\nonumber 
\end{eqnarray}
Secondly, let us briefly sketch the proof of the co-isotropy condition
$\mathscr{D}_{\mathcal{J}}^{\bot}\subset\mathscr{D}_{\mathcal{J}}$; see \cite{LeGorrecSIAM05} for details. Thereto,
take an element of the bond space 
$\left(\left(\begin{array}{c}
f_{1}\\
f_{\partial1}
\end{array}\right),\left(\begin{array}{c}
e_{1}\\
e_{\partial1}
\end{array}\right)\right)\in\mathcal{B}$ such that 
\[
\ll\left(\begin{array}{c}
f_{1}\\
f_{\partial1}
\end{array}\right),\left(\begin{array}{c}
e_{1}\\
e_{\partial1}
\end{array}\right),\,\left(\begin{array}{c}
f_{2}\\
f_{\partial2}
\end{array}\right),\left(\begin{array}{c}
e_{2}\\
e_{\partial2}
\end{array}\right)\gg_{+}=0
\]
for any $\left(\left(\begin{array}{c}
f_{2}\\
f_{\partial2}
\end{array}\right),\left(\begin{array}{c}
e_{2}\\
e_{\partial2}
\end{array}\right)\right)\in\mathscr{D}_{\mathcal{J}}$ . Then for any function $e_{2}$ vanishing at the boundary, according
to the definition of the symmetric pairing (\ref{eq:SymmPairingBoundary}),
and by the skew-symmetry of $\mathcal{J}$, 
\begin{eqnarray*}
\ll\left(\begin{array}{c}
f_{1}\\
f_{\partial1}
\end{array}\right),\left(\begin{array}{c}
e_{1}\\
e_{\partial1}
\end{array}\right),\,\left(\begin{array}{c}
f_{2}\\
f_{\partial2}
\end{array}\right),\left(\begin{array}{c}
e_{2}\\
e_{\partial2}
\end{array}\right)\gg_{+} & = & \left\langle f_{1},e_{2}\right\rangle _{L^{2}}+\left\langle \mathcal{J}e_{2},e_{1}\right\rangle _{L^{2}}\\
 & = & \left\langle \left(f_{1}-\mathcal{J}e_{1}\right),e_{2}\right\rangle _{L^{2}}
\end{eqnarray*}
for all $e_2$ vanishing at the boundary. Hence $f_{1}=\mathcal{J}e_{1}$. As a consequence, the symmetric pairing
is reduced to the boundary terms 
\begin{eqnarray*}
\ll\left(\begin{array}{c}
f_{1}\\
f_{\partial1}
\end{array}\right),\left(\begin{array}{c}
e_{1}\\
e_{\partial1}
\end{array}\right),\,\left(\begin{array}{c}
f_{2}\\
f_{\partial2}
\end{array}\right),\left(\begin{array}{c}
e_{2}\\
e_{\partial2}
\end{array}\right)\gg_{+} & = & -\left\langle f_{\partial1},e_{\partial2}\right\rangle _{\mathbb{R}^{n}}-\left\langle f_{\partial2},e_{\partial1}\right\rangle _{\mathbb{R}^{n}}\\
 & = &   -f_{\partial1}^{\top}e_{\partial2}-f_{\partial2}^{\top}e_{\partial1}\\
=  \left(\begin{array}{c}
f_{\partial1}\\
e_{\partial1}
\end{array}\right)^{\top}\frac{1}{\sqrt{2}}\left(\begin{array}{cc}
\Sigma & \Sigma\\
\Sigma & -\Sigma
\end{array}\right)\mathrm{tr}\left(T\left(\frac{\partial}{\partial z}\right)e_{2}\right) &  &
\end{eqnarray*}
which should vanish for any function $e_{2}$. By the assumption of
minimality, the matrices $T$ and $\left(\begin{array}{cc}
\Sigma & \Sigma\\
\Sigma & -\Sigma
\end{array}\right)$ have full rank and thus $\left(f_{\partial1},e_{\partial1}\right)$
satisfies the relation (\ref{eq:BoundaryPortVariablesSDS}) and hence
$\left(\left(\begin{array}{c}
f_{1}\\
f_{\partial1}
\end{array}\right),\left(\begin{array}{c}
e_{1}\\
e_{\partial1}
\end{array}\right)\right)\in\mathscr{D}_{\mathcal{J}}$ . 
\end{proof}

\subsubsection{Boundary port-Hamiltonian systems}

Boundary port-Hamiltonian systems as introduced in \cite{schaftGeomPhys02}
are \emph{open} Hamiltonian systems, defined with respect to Dirac
structures (see Proposition \ref{prop:StokesDiracStrucNorderLinearOrderOp}) as follows. 
\begin{definition}
\label{def:Linear_BPHS} \cite{schaftGeomPhys02} {[}Linear Boundary
Port Hamiltonian System{]} The linear boundary port-Hamiltonian system
on the state space $\mathscr{X}=L_{2}\left(a,\,b,\,\mathbb{R}^{n}\right)$
with respect to the Stokes-Dirac structure $\mathscr{D}_{\mathcal{J}}$
associated with the Hamiltonian matrix operator $\mathcal{J}$ as
in (\ref{eq:StokesDiracConstEq}) and generated by the functional
$\mathfrak{H}\left[x\right]$ in (\ref{eq:EnergyFunctionQuadratic}),
where $Q_{0}\in \mathbb{R}^{n\times n}$
is a positive semi-definite matrix, is the dynamical system 
\begin{equation}
\left(\left(\begin{array}{c}
\frac{\partial x}{\partial t}\\
f_{\partial}
\end{array}\right),\left(\begin{array}{c}
Q_{0}\,x\left(z\right)\\
e_{\partial}
\end{array}\right)\right)\in\mathscr{D}_{\mathcal{J}}\label{eq:BPHS_GeometricDef}
\end{equation}
where $\left(f_{\partial},\,e_{\partial}\right)$ are called \emph{boundary
port variables.} 
\end{definition}

In view of the definition of the Stokes-Dirac structure in (\ref{eq:StokesDiracConstEq}),
the linear port-Hamiltonian boundary system may also be formulated
as the system of equations 
\begin{eqnarray*}
\frac{\partial x}{\partial t} & = & \mathcal{J}(\frac{\partial}{\partial z})\,Q_{0}\,x\\
\left(\begin{array}{c}
f_{\partial}\\
e_{\partial}
\end{array}\right) & = & \frac{1}{\sqrt{2}}\left(\begin{array}{cc}
\Sigma & \Sigma\\
\Sigma & -\Sigma
\end{array}\right)\mathrm{tr}\left(T\left(\frac{\partial}{\partial z}\right)e\right)
\end{eqnarray*}

Furthermore, using the isotropy property of the Stokes-Dirac structure
(\ref{eq:IsotropyCond_StokesDirac}), one has $\left\langle f,e\right\rangle _{L^{2}}-\left\langle f_{\partial},e_{\partial}\right\rangle _{\mathbb{R}^{n}}=0$
, for any element $\left(\left(\begin{array}{c}
f\\
f_{\partial}
\end{array}\right),\left(\begin{array}{c}
e\\
e_{\partial}
\end{array}\right)\right)\in\mathscr{D}_{\mathcal{J}}$ . Hence a boundary port-Hamiltonian system satisfies the balance
equation 
\begin{equation}
\frac{d}{dt}\mathfrak{H}\left[x\right]=e_{\partial}^{\top}\,f_{\partial}\label{eq:EnergyBalance_BPHS}
\end{equation}

\begin{note}
Definition \ref{def:Linear_BPHS} could be generalized to the case where the matrix $Q_0$ is allowed to depend on the spatial variable $z$ and is, for instance, a coercive symmetric operator  $L^{2}\left(\left[a,\,b\right],\,\mathbb{R}^{n}\right) \rightarrow L^{2}\left(\left[a,\,b\right],\,\mathbb{R}^{n}\right)$ \cite{LeGorrecSIAM05,JacobZwart12}.
\end{note}

In physical systems modelling, the Hamiltonian is usually the total
energy of the system. In this case, \eqref{eq:EnergyBalance_BPHS}
is the energy balance equation which expresses that the rate of increase
of the total energy is equal to the power flow through its
boundary. Furthermore, since the matrix $Q_{0}$ is positive semi-definite
and hence the Hamiltonian is nonnegative, the system is \emph{impedance passive} \cite{LeGorrecSIAM05}.

These boundary port-Hamiltonian systems have proven to possess remarkable
properties \cite{JacobZwart12,Villegas_IEEE_TAC_09,Zwart_ESAIM_10,Jacob_IEEE_TAC19}
and have led to many results on the passivity-based boundary control
of distributed parameter systems controlled through their boundary
\cite{Macchelli_SIAM14,Macchelli_Automatica18_DissContrBPHS,Augner_SCL20_ExpStabBPHS}.

\subsection{\label{subsec:Motivational-examples} Motivational example: models
of elastic rods}

In this section, using the simple example of the elastic rod, we shall
recall how the boundary port-Hamiltonian model is established as the
prolongation of the classical symplectic Hamiltonian model. Furthermore,
we discuss its limitations with respect to the case of \emph{non-local}
elasticity constitutive relations.

\subsubsection{\label{subsec:Symplectic-formulation-of-rod} Symplectic formulation
of the dynamics of the elastic rod}

Let us recall the classical symplectic model of an elastic rod with
a local elasticity relation, obtained from variational principles and
the Legendre transformation of the Lagrangian with respect to the
velocity. Denote the \emph{displacement} of the elastic rod by $u(t,z)$
and its \emph{momentum density} by $p(t,z)$. The dynamical model
consists of a kinematic equation and a momentum balance equation,
combined into the symplectic Hamiltonian system 
\begin{equation}
\frac{\partial}{\partial t}\left(\begin{array}{c}
u\\
p
\end{array}\right)=\underbrace{\left(\begin{array}{cc}
0 & 1\\
-1 & 0
\end{array}\right)}_{=J_{0}}\;\left(\begin{array}{c}
\frac{\delta\mathfrak{H}}{\delta u}\\
\frac{\delta\mathfrak{H}}{\delta p}
\end{array}\right)\label{eq:VibrRod_StandardHamSyst}
\end{equation}
with Hamiltonian functional $\mathfrak{H}\left[u,\,p\right]=\mathfrak{K}\left(p\right)+\mathfrak{U}\left[u\right]$
\footnote{We follow the notation suggested in \cite[chap.7]{Olver93} and denote
by $G\left[u\right]=\int_{a}^{b}g\left(u,\,\frac{\partial u}{\partial z},\,..\,,\,\frac{\partial^{k}u}{\partial z^{k}}\right)\;dz$
a functional with density function $g$ depending on the variable $u\left(z\right)$ and a finite
number $k$ of its derivatives with respect to the spatial variable $z$.} consisting of the sum of the kinetic
energy $\mathfrak{K}\left(p\right)$ and the potential energy $\mathfrak{U}\left[u\right]$. The kinetic energy $\mathfrak{K}\left(p\right)$ is given as 
\begin{equation}
\mathfrak{K}(p)=\int_{a}^{b}\frac{1}{2}\frac{1}{\left(\rho A\right)}p^{2}\;dz\label{eq:VibrStringKinEn}
\end{equation}
where $\left(\rho A\right)$ denotes the linear mass density (product of
the section $A$ of the rod and the volume mass density $\rho$).
Furthermore, the potential energy is composed of two terms 
\begin{equation}
\mathfrak{U}\left[u\right]=\mathfrak{U}^{g}\left(u\right)+\int_{a}^{b}\frac{1}{2}\;T\;\left(\frac{\partial u}{\partial z}\right)^{2}\;dz,\label{eq:VibrStringPotEn_Local}
\end{equation}
where $\mathfrak{U}^{g}\left(u\right)$ is the elastic energy and the second term is the structural elastic energy (due to deformation of the rod).
For simplicity we will consider the elastic energy $\mathfrak{U}^{g}\left(u\right)$
to be the quadratic expression $\mathfrak{U}^{g}\left(u\right)=\int_{a}^{b}\frac{1}{2}ku^{2}\;dz$,
for some constant $k\geq0$. Note that the second term on the right-hand
side, the structural elastic energy, is expressed in terms of
the \emph{spatial derivative of the displacement}. By computing the
variational derivative $\frac{\delta\mathfrak{H}}{\delta u}\left(u\right)$ of the potential energy as indicated in (\ref{eq:VariationalDer_Polynomial}) one obtains the elastic
and structural elastic forces
\begin{equation}
\frac{\delta\mathfrak{H}}{\delta u}=\frac{\delta\mathfrak{U}}{\delta u}=k\,u-\frac{\partial}{\partial z}\left(T\;\frac{\partial u}{\partial z}\right)\label{eq:VibrStringVarDerPotEn}
\end{equation}
where $T>0$ denotes the \emph{elasticity modulus}.
It is clear that the symplectic model is \emph{not} a boundary port-Hamiltonian system since the structural elastic
energy, and hence the Hamiltonian functional, depends not on
the state variable $u$, but on its spatial derivative $\frac{\partial u}{\partial z}$. (This is in contrast with the Hamiltonian functional (\ref{eq:EnergyFunctionQuadratic}).)
Using the above expression of the elastic and structural elastic force, the dynamics (\ref{eq:VibrRod_StandardHamSyst})
becomes 
\begin{equation}
\frac{\partial}{\partial t}\left(\begin{array}{c}
u\\
p
\end{array}\right)=\underbrace{\left(\begin{array}{cc}
0 & 1\\
-1 & 0
\end{array}\right)}_{=J_{0}}\;\underbrace{\left(\begin{array}{cc}
k{\color{red}{\color{black}-\frac{\partial}{\partial z}T\frac{\partial}{\partial z}}} & 0\\
0 & \frac{1}{\rho A}
\end{array}\right)}_{=\mathcal{Q}(\frac{\partial}{\partial z})}\left(\begin{array}{c}
u\\
p
\end{array}\right)\label{eq:VibrRod_Sympl_DiffQ}
\end{equation}
where the dependency of the Hamiltonian on the spatial derivative
of the state variable $u$ implies that the energy is defined by a
matrix \emph{differential operator}.

\subsubsection{\label{subsec:Boundary-PH-formulation-Rod}Boundary port-Hamiltonian
formulation of the elastic rod}

On the other hand, the boundary port-Hamiltonian formulation is obtained
by augmenting the state space with the strain variable \cite{schaftGeomPhys02,maschkeFAP04}
\footnote{This procedure is actually a prolongation of the Hamiltonian system
to the first jet space. It has been also applied to the Timoshenko
beam model \cite{LeGorrecSIAM05}, the Mindlin and Kirchhoff plates
\cite{Brugnoli_AMM2019_I,Brugnoli_AMM2019_II}, some fluid dynamical
models \cite{MASCHKE_CPDE_2013}, and recently to the Allen-Cahn and
Cahn-Hilliard equations \cite{Vincent_IFAC_WC20_PhaseFields}.} 
\begin{equation}
\epsilon(t,z)=\frac{\partial u}{\partial z}(t,z)\label{eq:DefStrain}
\end{equation}
The total energy of the system is given by the Hamiltonian functional
\begin{equation}
\mathfrak{H}_{0}\left(u,\,\epsilon,\,p\right)=\mathfrak{U}^{g}\left(u\right)+\mathfrak{U}_{0}^{el}\left(\epsilon\right)+\mathfrak{K}(p)\label{eq:Hamiltonian0_VibratingRod}
\end{equation}
and the structural elastic energy $\mathfrak{U}_{0}^{el}\left(\epsilon\right)$
is thus expressed as a function of the state variable $\epsilon(t,z)$.

\begin{equation}
\mathfrak{U}_{0}^{el}(\epsilon)=\int_{a}^{b}\frac{1}{2}\;T\;\epsilon\left(z\right)^{2}\;dz\label{eq:VibrStringElEn}
\end{equation}
The variational derivative of the Hamiltonian (\ref{eq:Hamiltonian0_VibratingRod})
with respect to the state variables is written in vector notation
as 
\begin{equation}
\left(\begin{array}{c}
\delta_{u}H_{0}\\
\delta_{\epsilon}H_{0}\\
\delta_{p}H_{0}
\end{array}\right)=\underbrace{\left(\begin{array}{ccc}
k & 0 & 0\\
0 & T & 0\\
0 & 0 & \frac{1}{\left(\rho A\right)}
\end{array}\right)}_{=Q_{0}}\left(\begin{array}{c}
u\\
\epsilon\\
p
\end{array}\right)=\left(\begin{array}{c}
F^{pot}\\
\sigma\\
v
\end{array}\right)\label{eq:ElasticRodConstEqCoenergyVar}
\end{equation}
where $F^{pot}(t,z)$ is the potential force, $\sigma(t,z)$ the \emph{stress}
due to the structural elasticity, and $v(t,z)$ the \emph{velocity}.
The second line expresses the \emph{(local) elasticity relation} relating
the strain and the stress variables 
\begin{equation}
\sigma=T\,\epsilon\label{eq:LocalElasticityRelation}
\end{equation}
The dynamics of the elastic rod then can be formulated into an alternative
Hamiltonian formulation, consisting of the kinematic relation relating
the velocity to the momentum, the dynamics of the strain (obtained
by commutativity of the spatial and time derivatives) and the momentum
balance equation 
\begin{equation}
\frac{\partial}{\partial t}\left(\begin{array}{c}
u\\
\epsilon\\
p
\end{array}\right)=\underbrace{\left(\begin{array}{ccc}
0 & 0 & 1\\
0 & 0 & \frac{\partial}{\partial z}\\
-1 & \frac{\partial}{\partial z} & 0
\end{array}\right)}_{=\mathcal{J}_{1}}\,Q_{0}\left(\begin{array}{c}
u\\
\epsilon\\
p
\end{array}\right)\label{eq:HamSyst_VibrRod_Symmetry}
\end{equation}
generated by the Hamiltonian functional $\mathfrak{H}_{0}\left(u,\,\epsilon,\,p\right)$,
and with respect to the first-order matrix differential operator 
\begin{equation}
\mathcal{J}_{1}=P_{0}+P_{1}\frac{\partial}{\partial z}\qquad\textrm{with}\;P_{0}=\left(\begin{array}{ccc}
0 & 0 & 1\\
0 & 0 & 0\\
-1 & 0 & 0
\end{array}\right)\;\textrm{and}\;P_{1}=\left(\begin{array}{ccc}
0 & 0 & 0\\
0 & 0 & 1\\
0 & 1 & 0
\end{array}\right)\label{eq:ElasticRodHamOp_1order}
\end{equation}

In order to express the interaction of the system with its environment
through its boundary, this Hamiltonian system is augmented with a
pair of conjugated\emph{ boundary port variables} 
\begin{equation}
\left(\begin{array}{c}
f_{\partial}\\
e_{\partial}
\end{array}\right)=\left(\begin{array}{ccc}
0 & 0 & 1\\
0 & 1 & 0
\end{array}\right)\mathrm{tr}\,Q_{0}\left(\begin{array}{c}
u\\
\epsilon\\
p
\end{array}\right)=\mathrm{tr}\left(\begin{array}{c}
\frac{p}{\left(\rho A\left(z\right)\right)}\\
T\left(z\right)\;\epsilon\left(z\right)
\end{array}\right)\label{eq:BPvariables_VibrRod_Symmetry}
\end{equation}
which are the stress $\sigma=T\,\epsilon$ and velocity
$v$ at the boundary. The system (\ref{eq:HamSyst_VibrRod_Symmetry})
and (\ref{eq:BPvariables_VibrRod_Symmetry}) forms a \emph{boundary
port-Hamiltonian system} \cite{schaftGeomPhys02} with respect to
the Stokes-Dirac structure associated with the operator $\mathcal{J}_{1}$
according to Definition \ref{prop:StokesDiracStrucNorderLinearOrderOp}.

\subsubsection{\label{subsec:Descriptor-formulation-NonlocalRod}Descriptor formulation
of the elastic rod with nonlocal elasticity relations}

Finally, let us consider the elasto-dynamical model developed for
modelling carbon nanotubes \cite{Eringen_JAP1983} \cite{Karlicic_EJM2015},
which has been cast into a descriptor port-Hamiltonian formulation
in \cite{Zwart_MCMDS19_nonlocal}. For this composite material, the
constitutive relation of elasticity is \emph{nonlocal} and given by a \emph{differential}
relation between the strain $\epsilon$ and the stress $\sigma$ as
follows 
\begin{equation}
\sigma=T\left(1-\mu\frac{\partial^2}{\partial z^2}\right)^{-1}\,\epsilon,\label{eq:NonlocalElasticityRelation}
\end{equation}
where \emph{the parameter $\mu$ }and the elasticity modulus $T$ are positive real numbers.

These elasticity relations are expressed by the implicit relation
\begin{equation}
\left(1-\mu \frac{\partial^2}{\partial z^2}\right)\sigma-T\,\epsilon=0.\label{eq:ImplicitNonlocalElasticity}
\end{equation}
Alternatively, in image representation, the strain and the stress
variables are written as functions of a \emph{latent} strain $\lambda$
as follows 
\begin{equation}
\left(\begin{array}{c}
\epsilon\\
\sigma
\end{array}\right)=\left(\begin{array}{c}
1-\mu\left(\frac{\partial^2}{\partial z^2}\right)\\
T
\end{array}\right)\lambda\label{eq:ImageReprNonLocalElasticity}
\end{equation}

\begin{note}
The latent strain $\lambda =\left(1-\mu\frac{\partial^2}{\partial z^2}\right)^{-1}\epsilon$ corresponds to a non-local strain.
\end{note}

Inserting these relations into the system (\ref{eq:HamSyst_VibrRod_Symmetry}),
one obtains the descriptor system 
\begin{equation}
\frac{\partial}{\partial t}\underbrace{\left(\begin{array}{ccc}
1 & 0 & 0\\
0 & 1-\mu\left(\frac{\partial^2}{\partial z^2}\right) & 0\\
0 & 0 & 1
\end{array}\right)}_{=\mathcal{P}}\left(\begin{array}{c}
u\\
\lambda\\
p
\end{array}\right)=\underbrace{\left(\begin{array}{ccc}
0 & 0 & 1\\
0 & 0 & \frac{\partial}{\partial z}\\
-1 & \frac{\partial}{\partial z} & 0
\end{array}\right)}_{=\mathcal{J}\left(\frac{\partial}{\partial z}\right)}\;\underbrace{\left(\begin{array}{ccc}
k & 0 & 0\\
0 & T & 0\\
0 & 0 & \frac{1}{\rho A}
\end{array}\right)}_{=Q_{0}}\left(\begin{array}{c}
u\\
\lambda\\
p
\end{array}\right)\label{eq:HamSysNonlocalElasticityRelation}
\end{equation}
Note that, similarly to the symplectic formulation (\ref{eq:VibrRod_Sympl_DiffQ}),
the elastic constitutive relations are defined using a matrix differential
operator $\mathcal{P}$ . Furthermore the system (\ref{eq:HamSysNonlocalElasticityRelation})
is no more an explicit dynamical system; it is a descriptor system.
For this reason, this example does not fall into the class of boundary
port-Hamiltonian systems defined in Section \ref{subsec:Boundary-Port-HamiltonianSyst}.

\subsection{Summary}

Using the example of the elastic rod, we have recalled how the boundary
port-Hamiltonian formulation is obtained by augmenting the state space
in such a way that the Hamiltonian functional is only depending on
the state variables and not on their spatial derivatives. We have
noticed that both the symplectic formulation of the dynamics of the
elastic rod with local elasticity relation (\ref{eq:HamSyst_VibrRod_Symmetry}),
as well as the descriptor formulation arising from the nonlocal elasticity
relations (\ref{eq:HamSysNonlocalElasticityRelation}), do not fit
into Definition \ref{def:Linear_BPHS} of boundary port-Hamiltonian
systems. This is due to the fact that both the elasticity relations
(\ref{eq:VibrStringVarDerPotEn}) and (\ref{eq:ImplicitNonlocalElasticity})
are defined by \emph{differential relations}. The aim of the rest
of the paper is to \emph{extend} Definition \ref{def:Linear_BPHS}
of boundary port-Hamiltonian systems in order to encompass constitutive
relations of the energy that involve spatial derivatives of the state
variables.

\section{\label{sec:LagrangianSubspaces}Lagrangian subspaces associated with
reciprocal differential operators}

The aim of this section is to extend the definition of the energy
corresponding to the constitutive relation (\ref{eq:CoenergyExplicitLinear}),
defining the co-energy variable as a function of the state variable,
to relations involving differential operators as in (\ref{eq:VibrStringVarDerPotEn})
or their inverse as in (\ref{eq:NonlocalElasticityRelation}). This
will be done using the notion of a \emph{Lagrangian subspace} \cite{Weinstein_71},
which allows a generalization of the Hamiltonian functions of Hamiltonian
systems \cite[chap. 5.3]{Abraham_marsden87} \cite{Weinstein_Book_77}.
The same basic idea was recently employed for linear
and nonlinear finite-dimensional port-Hamiltonian systems in \cite{SCL_18,Schaft2020_DiracLagrangeNonlinear}.

\subsection{Lagrangian subspaces}

Let us briefly recall the definition of a Lagrangian subspace \cite{Libermann_marle87}. 
\begin{definition}
\label{def:Lagrangian-subspace} {[}Lagrangian subspace{]} Let $\mathcal{X}$
be a vector space (called \emph{space of energy variables}) and  $\mathcal{E}=\mathcal{X}^{*}$ 
its dual space (called \emph{space of effort variables}), endowed with the duality pairing $\left\langle \:|\:\right\rangle :\,\mathcal{X}\times\mathcal{E}\rightarrow\mathbb{R}$.
A \emph{Lagrangian subspace} in $\mathcal{X}\times\mathcal{E}$ \footnote{In fact we identify here the cotangent space $T^{*}\mathcal{X}$ of
the real vector space with the product space: $T^{*}\mathcal{X}\sim\mathcal{X}\times\mathcal{X}^{*}$
. For physical systems, it is called the \emph{Phase Space} of the
system.}, is a subspace $\mathcal{L}\subset\mathcal{X}\times\mathcal{E}$
such that 
\begin{equation}
\mathcal{L}=\mathcal{L}^{\bot_{-}}\label{eq:LagrangianCoIsotropy}
\end{equation}
where $\bot_{-}$ denotes the orthogonal with respect to the alternate
bilinear form on $\mathcal{X}\times\mathcal{E}$ 
\begin{equation}
\left\langle \left(x_{1},e_{1}\right),\left(x_{2},e_{2}\right)\right\rangle _{-}:=\left\langle e_{1}|x_{2}\right\rangle -\left\langle e_{2}|x_{1}\right\rangle \label{eq:AlternateBilinearForm}
\end{equation}
\end{definition}

Analogously to Dirac subspaces, which extend graphs of \emph{skew-symmetric}
mappings, Lagrangian subspaces extend graphs of \emph{symmetric} mappings.
For instance, the graph of the positive symmetric matrix operator
$\mathcal{Q}\in \mathbb{R}^{n\times n}$
\begin{equation}
\mathcal{L}=\left\{ \left(x,\,e\right)\in L^{2}\left(\left[a,\,b\right],\,\mathbb{R}^{n}\right)\;\mbox{ such that }\,e(z)=\mathcal{Q}\,x(z),\,\forall z\in\left[a,\,b\right]\right\} \label{eq:LagrangianSubspaceSymmetricMatrix}
\end{equation}
is a trivial example of a Lagrangian subspace. In physical systems
modelling, the map $\mathcal{Q}$ corresponds to the constitutive relation
between the energy variable $x$ and the co-energy variable $e$ of
the system, while the symmetry of the map
corresponds to Maxwell's reciprocity conditions \footnote{\label{fn:MaxwellReciprocity} 
\emph{Maxwell' reciprocity relations} are a property assumed for the constitutive relations
of the co-energy variables in the context of electrodynamics \cite[section 86]{Maxwell_1873_treatise},
elasticity \cite{Achenbach2003reciprocity}, or thermodynamics \cite[chap.7]{Callen85}}.

\subsection{Isotropy and reciprocal differential operators}

Let us consider the following class of \emph{constitutive relations}
defining the energy variables $x$ and the co-energy variables $e$
in $L^{2}\left(\left[a,\,b\right],\,\mathbb{R}^{n}\right)^{2}$ by  \eqref{eq:SubspaceConstRel-1} :
\begin{equation}
\left(\begin{array}{c}
x\\
e
\end{array}\right)=\left(\begin{array}{c}
\mathcal{P}\\
\mathcal{S}
\end{array}\right)\xi:=\mathcal{R}\xi\qquad\xi\in H_{0}^{M}\left(\left[a,\,b\right],\,\mathbb{R}^{n}\right)\label{eq:ConstRelEnergyImage}
\end{equation}
where $\mathcal{R}=\left(\begin{array}{c}
\mathcal{P}\\
\mathcal{S}
\end{array}\right)$, is a $\left(2n\times n\right)$ matrix differential operator, of
order $M$, composed of the two $\left(n\times n\right)$
matrix differential operators $\mathcal{P}$ and $\mathcal{S}$ mapping
the latent variable $\xi$ into the space of energy variables and
the space of co-energy variables respectively.

The isotropy condition (\ref{eq:LagrangianCoIsotropy}) leads to the
following condition on the operator $\mathcal{R}$. 
\begin{proposition}
\label{prop:Reciprocity} Consider the subspace associated with the
constitutive relation (\ref{eq:ConstRelEnergyImage}) 
\begin{eqnarray}
\mathcal{L}_{\mathcal{R}}^{0} & = & \left\{ \left(x,\,e\right)\in L^{2}\left(\left[a,\,b\right],\,\mathbb{R}^{n}\right)^{2}\: \mid \:\exists\xi\in H_{0}^{M}\left(\left[a,\,b\right],\,\mathbb{R}^{n}\right)\:\textrm{s.t.}\;\left(\begin{array}{c}
x\\
e
\end{array}\right)=\mathcal{R}\,\xi\right\} \label{eq:SubspaceConstRel}
\end{eqnarray}
where $H_{0}^{M}\left(\left[a,\,b\right],\,\mathbb{R}^{n\times n}\right)$
denotes the set of functions with support contained in $\left]a,b\right[$.
The isotropy condition $\mathcal{L}_{\mathcal{R}}^{0}\subset\left(\mathcal{L}_{\mathcal{R}}^{0}\right)^{\bot}$
with respect to the alternate form (\ref{eq:AlternateBilinearForm})
associated with the $L^2$ inner product, is satisfied if and
only if the following \emph{reciprocity condition is satisfied} \footnote{We call this condition \emph{reciprocity condition} in reference to
\emph{Maxwell' reciprocity relations}.} 

\begin{equation}
\mathcal{S}^{*}\mathcal{P}-\mathcal{P}^{*}\mathcal{S}=\mathcal{R}^{*}\Theta_{n}\mathcal{R}=0\label{eq:ReciprocityConditionOperators}
\end{equation}
denoting 
\begin{equation}
\Theta_{n}=\left(\begin{array}{cc}
0_{n} & -I_{n}\\
I_{n} & 0_{n}
\end{array}\right)\label{eq:SymplecticMatrix}
\end{equation}
the symplectic matrix of order $2n$ and $\mathcal{R}^{*}$ the adjoint
of the operator $\mathcal{R}$ with respect to $L^{2}\left(\left[a,\,b\right]\right)$. 
\end{proposition}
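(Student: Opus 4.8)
The plan is to evaluate the alternate form \eqref{eq:AlternateBilinearForm} on a generic pair of elements of $\mathcal{L}_{\mathcal{R}}^{0}$ and reduce the isotropy condition to an operator identity. First I would take two elements $(x_i,e_i)=\mathcal{R}\xi_i$, $i=1,2$, with $\xi_1,\xi_2\in H_0^M\left(\left[a,b\right],\mathbb{R}^n\right)$, and substitute $x_i=\mathcal{P}\xi_i$, $e_i=\mathcal{S}\xi_i$ into \eqref{eq:AlternateBilinearForm}, giving
\[
\left\langle (x_1,e_1),(x_2,e_2)\right\rangle_- = \left\langle \mathcal{S}\xi_1,\mathcal{P}\xi_2\right\rangle_{L^2} - \left\langle \mathcal{S}\xi_2,\mathcal{P}\xi_1\right\rangle_{L^2}.
\]
Because $\xi_1,\xi_2$ have support strictly inside $[a,b]$, all boundary remainders produced by integration by parts vanish, so the defining property of the formal adjoint applies verbatim: $\left\langle \mathcal{S}\xi_1,\mathcal{P}\xi_2\right\rangle_{L^2}=\left\langle \xi_1,\mathcal{S}^*\mathcal{P}\xi_2\right\rangle_{L^2}$ and, using symmetry of the real $L^2$ pairing, $\left\langle \mathcal{S}\xi_2,\mathcal{P}\xi_1\right\rangle_{L^2}=\left\langle \mathcal{P}\xi_1,\mathcal{S}\xi_2\right\rangle_{L^2}=\left\langle \xi_1,\mathcal{P}^*\mathcal{S}\xi_2\right\rangle_{L^2}$. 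Hence the form collapses to $\left\langle \xi_1,(\mathcal{S}^*\mathcal{P}-\mathcal{P}^*\mathcal{S})\xi_2\right\rangle_{L^2}$.

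Next I would record the purely algebraic identity $\mathcal{S}^*\mathcal{P}-\mathcal{P}^*\mathcal{S}=\mathcal{R}^*\Theta_n\mathcal{R}$. Since $\mathcal{R}=\left(\begin{smallmatrix}\mathcal{P}\\\mathcal{S}\end{smallmatrix}\right)$, its $L^2$-adjoint is the row operator $\mathcal{R}^*=\left(\mathcal{P}^*\ \ \mathcal{S}^*\right)$, while $\Theta_n\mathcal{R}=\left(\begin{smallmatrix}-\mathcal{S}\\\mathcal{P}\end{smallmatrix}\right)$; multiplying out gives $\mathcal{R}^*\Theta_n\mathcal{R}=\mathcal{S}^*\mathcal{P}-\mathcal{P}^*\mathcal{S}$, which is precisely the left-hand side of \eqref{eq:ReciprocityConditionOperators}. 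Thus isotropy is exactly the requirement that the bilinear form $(\xi_1,\xi_2)\mapsto\left\langle \xi_1,\mathcal{R}^*\Theta_n\mathcal{R}\,\xi_2\right\rangle_{L^2}$ vanishes for all $\xi_1,\xi_2\in H_0^M$.

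Finally I would prove the equivalence with the operator equation $\mathcal{R}^*\Theta_n\mathcal{R}=0$. One implication is immediate: if the operator vanishes, the form is identically zero. For the converse, abbreviate $\mathcal{N}:=\mathcal{R}^*\Theta_n\mathcal{R}$, a constant-coefficient matrix differential operator, and restrict to $\xi_1,\xi_2\in C_c^\infty(]a,b[)$, which is dense in $H_0^M$. Fixing a smooth $\xi_2$, the function $\mathcal{N}\xi_2$ is smooth, and $\int_a^b \xi_1^\top \mathcal{N}\xi_2\,dz=0$ for every test function $\xi_1$; by the fundamental lemma of the calculus of variations this forces $\mathcal{N}\xi_2\equiv 0$. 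Letting $\xi_2$ range over all test functions then forces every coefficient matrix of $\mathcal{N}$ to vanish, i.e.\ $\mathcal{N}=0$ as an operator, equivalently its polynomial symbol is identically zero. The step needing the most care is this last one: it is where the restriction to latent variables of compact support in $]a,b[$ is essential, both to discard boundary terms in the adjoint manipulation and to invoke density of test functions; keeping track of the order (at most $2M$) of $\mathcal{N}$, and hence of the regularity needed for the pairings to be literal $L^2$ integrals versus distributional ones, is the only genuinely delicate bookkeeping.
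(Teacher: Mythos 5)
Your proposal is correct and follows essentially the same route as the paper: substitute $x_i=\mathcal{P}\xi_i$, $e_i=\mathcal{S}\xi_i$ into the alternate form, use the formal adjoint (with boundary remainders vanishing because the $\xi_i$ have support in $\left]a,b\right[$) to collapse it to $\int_a^b \xi_1^\top(\mathcal{S}^*\mathcal{P}-\mathcal{P}^*\mathcal{S})\xi_2\,dz$, and identify this operator with $\mathcal{R}^*\Theta_n\mathcal{R}$. Your final step, justifying via the fundamental lemma and coefficient identification that the vanishing of this bilinear form on all test functions forces the operator itself to vanish, is a welcome elaboration of a claim the paper simply asserts.
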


\begin{proof}
Consider the state and energy spaces $\mathcal{X}=\mathcal{E}=L^{2}\left(\left[a,\,b\right],\,\mathbb{R}^{n\times n}\right)$
with the pairing $\left\langle \:|\:\right\rangle $ being the usual
$L^2$ inner product
\begin{equation}
\left\langle e|x\right\rangle =\int_{a}^{b}e^{\top}\left(z\right)\,x\left(z\right)dz\label{eq:HilbertSpaceProduct}
\end{equation}

Then the isotropy condition $\mathcal{L}_{\mathcal{R}}^{0}\subset\left(\mathcal{L}_{\mathcal{R}}^{0}\right)^{\bot}$
\begin{equation}
\left\langle \left(\begin{array}{c}
x_{1}\\
e_{1}
\end{array}\right),\,\left(\begin{array}{c}
x_{2}\\
e_{2}
\end{array}\right)\right\rangle _{-}:=\left\langle e_{1}|x_{2}\right\rangle -\left\langle e_{2}|x_{1}\right\rangle =0\quad\forall\left(\begin{array}{c}
x_{i}\\
e_{i}
\end{array}\right)\in\mathcal{L}_{\mathcal{R}}^{0},\,i=1,2\label{eq:IsotropyCondition}
\end{equation}
may be written, using (\ref{eq:ConstRelEnergyImage}), for functions
$\xi_{1},\,\xi_{2}\in H_{0}^{M}\left(\left[a,\,b\right],\,\mathbb{R}^{n}\right)$,
as follows 
\begin{eqnarray}
\left\langle \left(\begin{array}{c}
x_{1}\\
e_{1}
\end{array}\right),\,\left(\begin{array}{c}
x_{2}\\
e_{2}
\end{array}\right)\right\rangle _{-} & = & \int_{a}^{b}e_{1}^{\top}\left(z\right)\,x_{2}\left(z\right)dz-\int_{a}^{b}e_{2}^{\top}\left(z\right)\,x_{1}\left(z\right)dz\nonumber \\
 & = & \int_{a}^{b}\left[\left(\mathcal{S}\xi_{1}\left(z\right)\right)^{\top}\,\mathcal{P}\xi_{2}\left(z\right)-\left(\mathcal{S}\xi_{2}\left(z\right)\right)^{\top}\,\mathcal{P}\xi_{1}\left(z\right)\right]dz\nonumber \\
 & = & \int_{a}^{b}\left[\left(\xi_{1}\left(z\right)\right)^{\top}\,\left(\mathcal{S}^{*}\mathcal{P}-\mathcal{P}^{*}\mathcal{S}\right)\,\xi_{2}\left(z\right)\right]dz\label{eq:AltProdHN_0}
\end{eqnarray}
The product is zero for any pairs $\xi_{1},\,\xi_{2}\in H_{0}^{M}\left(\left[a,\,b\right],\,\mathbb{R}^{n}\right)$
if and only if the reciprocity condition (\ref{eq:ReciprocityConditionOperators})
is satisfied. 
\end{proof}
In this paper we only consider \emph{constant coefficient }matrix
differential operators of the type 
\begin{equation}
\mathcal{R}=\sum_{i=0}^{M}R_{i}\frac{\partial^{i}}{\partial z^{i}}:= R\left(\frac{\partial}{\partial z}\right)\:,\;M\in\mathbb{N},\;R_{i}\in\mathbb{R}^{2n\times n}\label{eq:MatrixOperatorConstRelEnergy}
\end{equation}
where $R\left(s\right)=\sum_{i=0}^{M}R_{i}s^{i}$ is \emph{a polynomial
matrix} of order $M$. In view of the constitutive relation (\ref{eq:ConstRelEnergyImage}),
the polynomial matrix is decomposed into two $\left(n\times n\right)$
polynomial matrices $P\left(s\right)$ and $S\left(s\right)$ 
\begin{equation}
R\left(s\right)=\left(\begin{array}{c}
P\left(s\right)\\
S\left(s\right)
\end{array}\right)\label{eq:PolynomialMatrixDecomposition}
\end{equation}

Let us now express the reciprocity condition (\ref{eq:ReciprocityConditionOperators})
in terms of the polynomial matrices $P\left(s\right)$ and $S\left(s\right)$. 
\begin{proposition}
The reciprocity condition (\ref{eq:ReciprocityConditionOperators})
on the differential operator $\mathcal{R}$ (\ref{eq:MatrixOperatorConstRelEnergy})
is equivalent to the following condition on the associated polynomial
matrix (\ref{eq:PolynomialMatrixDecomposition}) 
\begin{equation}
R^{\top}\left(-s\right)\Theta_{n}R\left(s\right)=S^{\top}\left(-s\right)P\left(s\right)-P^{\top}\left(-s\right)S\left(s\right)=0\label{eq:ReciprocityConditionsPolynomialMatrix}
\end{equation}
\end{proposition}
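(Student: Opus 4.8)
The plan is to reduce the operator identity (\ref{eq:ReciprocityConditionOperators}) to an identity between the associated symbols (polynomial matrices), exploiting that all operators involved have \emph{constant} coefficients. The crucial algebraic fact is that, for constant-coefficient matrix differential operators, the correspondence ``operator $\mapsto$ polynomial matrix'' is a homomorphism: composition of operators corresponds to multiplication of symbols. Indeed, writing $\mathcal{P}=P(\frac{\partial}{\partial z})$ and $\mathcal{S}=S(\frac{\partial}{\partial z})$, since the coefficient matrices are constant the operator $\frac{\partial}{\partial z}$ commutes with them and plays exactly the role of the indeterminate $s$; hence any composite such as $\mathcal{S}^{*}\mathcal{P}$ is again a constant-coefficient differential operator, whose symbol is the product of the symbols of $\mathcal{S}^{*}$ and $\mathcal{P}$.

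Next I would invoke the formula for the formal adjoint recalled in Section~\ref{sec:Reminder2PolynomialCalculus}: the symbol of $\mathcal{P}^{*}$ is $P^{\top}(-s)$ and the symbol of $\mathcal{S}^{*}$ is $S^{\top}(-s)$. Combining this with the homomorphism property, the operator $\mathcal{S}^{*}\mathcal{P}$ has symbol $S^{\top}(-s)P(s)$ and the operator $\mathcal{P}^{*}\mathcal{S}$ has symbol $P^{\top}(-s)S(s)$, so by linearity $\mathcal{S}^{*}\mathcal{P}-\mathcal{P}^{*}\mathcal{S}$ has symbol $S^{\top}(-s)P(s)-P^{\top}(-s)S(s)$. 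The block-matrix identity $R^{\top}(-s)\Theta_{n}R(s)=S^{\top}(-s)P(s)-P^{\top}(-s)S(s)$ asserted in the statement is then a one-line verification using the decomposition (\ref{eq:PolynomialMatrixDecomposition}) and the definition (\ref{eq:SymplecticMatrix}) of $\Theta_{n}$: one computes $\Theta_{n}R(s)=\bigl(\begin{smallmatrix}-S(s)\\ P(s)\end{smallmatrix}\bigr)$ and left-multiplies by the row block $R^{\top}(-s)=\bigl(\,P^{\top}(-s)\;\;S^{\top}(-s)\,\bigr)$.

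Finally I would close the equivalence by observing that a constant-coefficient matrix differential operator is the zero operator if and only if its symbol is the zero polynomial matrix. One direction is immediate; for the other, if $\sum_{k}C_{k}\frac{\partial^{k}}{\partial z^{k}}$ annihilates every admissible test function, then evaluating it on the monomials $z^{j}e_{i}$ (equivalently, on exponentials $e^{sz}v$ for all $s$ and all constant vectors $v$) forces each coefficient matrix $C_{k}$ to vanish, i.e.\ the symbol $\sum_{k}C_{k}s^{k}$ is identically zero. Applying this to $\mathcal{S}^{*}\mathcal{P}-\mathcal{P}^{*}\mathcal{S}$ yields exactly the asserted equivalence between (\ref{eq:ReciprocityConditionOperators}) and (\ref{eq:ReciprocityConditionsPolynomialMatrix}).

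I expect no serious obstacle: the statement is essentially a transcription of the operator identity at the level of symbols, and the only point deserving a careful word is the last one, namely the \emph{faithfulness} of the symbol map on constant-coefficient operators (that the operator vanishes if and only if all its coefficient matrices vanish). This has to be stated on the function class at hand, $H_{0}^{M}$, which contains enough smooth compactly supported functions to detect each $C_{k}$, so that ``reciprocity as an operator identity'' and ``reciprocity as a polynomial identity'' genuinely coincide.
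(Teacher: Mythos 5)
Your proposal is correct and follows exactly the route the paper intends: the paper states this proposition without proof, treating it as an immediate consequence of the operator--symbol correspondence recalled in Section~2 (in particular that the symbol of a formal adjoint is $A^{\top}(-s)$ and that composition of constant-coefficient operators corresponds to multiplication of symbols). Your write-up simply makes those implicit steps explicit, including the one point genuinely worth a remark --- the faithfulness of the symbol map on $H_{0}^{M}$ --- and the block computation $R^{\top}(-s)\Theta_{n}R(s)=S^{\top}(-s)P(s)-P^{\top}(-s)S(s)$ checks out with the stated sign convention for $\Theta_{n}$.
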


Note that if $P(s)$ is the identity matrix, then the above condition
reduces to $S\left(s\right)=S^{\top}\left(-s\right)$. That is, the
matrix differential operator $S\left(\frac{\partial}{\partial z}\right)$
is \emph{formally self-adjoint}.

As an illustration, let us express the reciprocity condition in a
number of particular cases. Defining the polynomial matrices $P\left(s\right)=\sum_{i=0}^{M}P_{i}s^{i}$
and $S\left(s\right)=\sum_{i=0}^{M}S_{i}s^{i}$ , consider first the
case when the reciprocal operator $\mathcal{R}$ in (\ref{eq:MatrixOperatorConstRelEnergy})
is of order zero. 
\begin{corr}
Consider an operator $\mathcal{R}=R_{0}=\left(\begin{array}{c}
P_{0}\\
S_{0}
\end{array}\right)$ where $P_{0},Q_{0}\in\mathbb{R}^{n\times n}$, being of differential
order $0$ , the reciprocity condition (\ref{eq:ReciprocityConditionsPolynomialMatrix})
is equivalent to 
\begin{equation}
R_{0}^{\top}\Theta_{n}R_{0}=S_{0}^{\top}P_{0}-P_{0}^{\top}S_{0}=0\label{eq:ReciprocityCondOperator0}
\end{equation}
\end{corr}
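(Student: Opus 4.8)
The plan is to obtain the corollary as an immediate specialization of the general polynomial reciprocity condition (\ref{eq:ReciprocityConditionsPolynomialMatrix}) to the order-zero case, by direct substitution. When $\mathcal{R}$ has differential order $0$, the associated polynomial matrix $R(s)$ in (\ref{eq:MatrixOperatorConstRelEnergy}) reduces to the constant matrix $R_0$; equivalently, in the decomposition (\ref{eq:PolynomialMatrixDecomposition}) the two polynomial blocks degenerate to the constants $P(s)=P_0$ and $S(s)=S_0$, with all higher coefficients $P_i=S_i=0$ for $i\geq 1$.

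First I would note that, since $R(s)=R_0$ is independent of $s$, we have in particular $R(-s)=R_0$, and likewise $P(-s)=P_0$ and $S(-s)=S_0$. Substituting these into the outer expression of (\ref{eq:ReciprocityConditionsPolynomialMatrix}) gives $R^{\top}(-s)\Theta_n R(s)=R_0^{\top}\Theta_n R_0$, which is now a constant matrix rather than a genuine polynomial in $s$. The one point worth making explicit is that the original identity is required to hold \emph{for all} $s$, but with $s$ absent this quantifier is vacuous, so the condition collapses to the single algebraic matrix equation asserted in (\ref{eq:ReciprocityCondOperator0}).

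Finally I would expand the middle expression of (\ref{eq:ReciprocityConditionsPolynomialMatrix}) under the same substitution, obtaining $S^{\top}(-s)P(s)-P^{\top}(-s)S(s)=S_0^{\top}P_0-P_0^{\top}S_0$, and check that this agrees with $R_0^{\top}\Theta_n R_0$ by block-multiplying $R_0=\left(\begin{array}{c}P_0\\ S_0\end{array}\right)$ against the symplectic matrix $\Theta_n$ in (\ref{eq:SymplecticMatrix}); this confirms that the two displayed forms in (\ref{eq:ReciprocityCondOperator0}) coincide. There is no real obstacle here: the result is a routine specialization, and the only thing one must be careful about is the (purely notational) typo whereby $Q_0$ should read $S_0$, so that the block structure of $R_0$ and hence the contraction with $\Theta_n$ is the intended one.
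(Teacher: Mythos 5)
Your proposal is correct and matches the paper's (implicit) reasoning: the paper states this corollary without proof, treating it as the routine specialization of the polynomial reciprocity condition \eqref{eq:ReciprocityConditionsPolynomialMatrix} to a constant $R(s)=R_0$, which is exactly what you carry out, including the block computation $R_0^{\top}\Theta_n R_0 = S_0^{\top}P_0 - P_0^{\top}S_0$ and the observation that $Q_0$ in the statement is a typo for $S_0$.
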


In other words, for reciprocal operators of order zero, \emph{the
matrix $S_{0}^{\top}P_{0}$ is symmetric}, which is the condition
obtained in the finite-dimensional case \cite{SCL_18} . 
\begin{example}
\label{exa:The-elastic-rod_Thermo_BPHS-1} Consider again the example
of the elastic rod with local elasticity relations, energy variables
$x_{0}\left(t,z\right)^{\top}=\left(u,\,\epsilon\,,p\right)$ and
co-energy variables $e_{0}\left(t,z\right)^{\top}=\left(F^{pot},\,\sigma\,,v\right)$
defined in (\ref{eq:ElasticRodConstEqCoenergyVar}). Equivalently,
one may as well define the energy and co-energy variables using the
latent variable $\xi_{0}\left(t,z\right)^{\top}=\left(u,\,\epsilon\,,v\right)$
where the velocity $v$ is used instead of the momentum $p$. This
leads to the following formulation 
\begin{equation}
\frac{\partial}{\partial t}\underbrace{\left(\begin{array}{ccc}
1 & 0 & 0\\
0 & 1 & 0\\
0 & 0 & \left(\rho A\right)
\end{array}\right)}_{=P_{0}}\left(\begin{array}{c}
u\\
\epsilon\\
v
\end{array}\right)=\underbrace{\left(\begin{array}{ccc}
0 & 0 & 1\\
0 & 0 & \frac{\partial}{\partial z}\\
-1 & \frac{\partial}{\partial z} & 0
\end{array}\right)}_{=\mathcal{J}_{1}}\,\underbrace{\left(\begin{array}{ccc}
k & 0 & 0\\
0 & T & 0\\
0 & 0 & 1
\end{array}\right)}_{=S_{0}}\left(\begin{array}{c}
u\\
\epsilon\\
v
\end{array}\right)\label{eq:Port_HamSyst_VibrRod_Symmetry-bis-1}
\end{equation}
\end{example}

For second-order differential operators, as appearing in the example
of the elastic rod, the reciprocity condition is obtained by a straightforward
calculation as follows. 
\begin{corr} \label{corr:ReciprocityCond2ndOrderOperators}
For second-order $n\times n$ matrix differential operators: 
\begin{equation}
\mathcal{P}=P_{0}+P_{1}\frac{\partial}{\partial z}+P_{2}\frac{\partial^{2}}{\partial z^{2}}\qquad\textrm{and}\quad\mathcal{S}=S_{0}+S_{1}\frac{\partial}{\partial z}+S_{2}\frac{\partial^{2}}{\partial z^{2}}\label{eq:SecondOrderSymOp}
\end{equation}
the reciprocity condition (\ref{eq:ReciprocityConditionsPolynomialMatrix})
is equivalent to: $S_{0}^{\top}P_{0}$ and $S_{2}^{\top}P_{2}$ are
symmetric, $\left(P_{0}^{\top}S_{1}-S_{0}^{\top}P_{1}\right)$ and
$\left(P_{1}^{\top}S_{2}-S_{1}^{\top}P_{2}\right)$ are skew-symmetric,
$\left(P_{0}^{\top}S_{2}-S_{0}^{\top}P_{2}+S_{1}^{\top}P_{1}\right)$
is symmetric. 
\end{corr}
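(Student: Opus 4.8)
The plan is to reduce everything to the polynomial identity established in the preceding proposition, namely \eqref{eq:ReciprocityConditionsPolynomialMatrix}, $S^{\top}(-s)P(s)-P^{\top}(-s)S(s)=0$, and to expand the left-hand side as an ordinary matrix-valued polynomial in the scalar indeterminate $s$. With $P(s)=P_{0}+P_{1}s+P_{2}s^{2}$ and $S(s)=S_{0}+S_{1}s+S_{2}s^{2}$ as in \eqref{eq:SecondOrderSymOp}, I would first record $P^{\top}(-s)=P_{0}^{\top}-P_{1}^{\top}s+P_{2}^{\top}s^{2}$ and $S^{\top}(-s)=S_{0}^{\top}-S_{1}^{\top}s+S_{2}^{\top}s^{2}$, then form $F(s):=S^{\top}(-s)P(s)-P^{\top}(-s)S(s)$ and collect the coefficient $F_{k}$ of each power $s^{k}$, $k=0,\dots,4$. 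Since a polynomial matrix vanishes identically if and only if all of its coefficient matrices vanish, the reciprocity condition is equivalent to the five equations $F_{0}=F_{1}=F_{2}=F_{3}=F_{4}=0$.

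The observation that organizes these five equations is the structural identity $F^{\top}(-s)=-F(s)$, which is immediate from the definition of $F$. Writing $F(s)=\sum_{k}F_{k}s^{k}$, this forces $F_{k}^{\top}=(-1)^{k+1}F_{k}$, so the even-degree coefficients $F_{0},F_{2},F_{4}$ are automatically skew-symmetric while the odd-degree coefficients $F_{1},F_{3}$ are automatically symmetric. Consequently, requiring an even coefficient to vanish will translate into a \emph{symmetry} statement about the relevant product, whereas requiring an odd coefficient to vanish will translate into a \emph{skew-symmetry} statement, which is exactly the pattern claimed in the corollary.

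Concretely I would then read off the coefficients. The extreme ones are $F_{0}=S_{0}^{\top}P_{0}-P_{0}^{\top}S_{0}$ and $F_{4}=S_{2}^{\top}P_{2}-P_{2}^{\top}S_{2}$, so $F_{0}=0$ and $F_{4}=0$ assert precisely that $S_{0}^{\top}P_{0}$ and $S_{2}^{\top}P_{2}$ are symmetric. The linear and cubic coefficients are $F_{1}=(S_{0}^{\top}P_{1}-S_{1}^{\top}P_{0})-(P_{0}^{\top}S_{1}-P_{1}^{\top}S_{0})$ and $F_{3}=(S_{2}^{\top}P_{1}-S_{1}^{\top}P_{2})-(P_{2}^{\top}S_{1}-P_{1}^{\top}S_{2})$; using $(P_{i}^{\top}S_{j})^{\top}=S_{j}^{\top}P_{i}$, one regroups each as $\pm\big[M+M^{\top}\big]$ with $M=P_{0}^{\top}S_{1}-S_{0}^{\top}P_{1}$ for $F_{1}$ and $M=P_{1}^{\top}S_{2}-S_{1}^{\top}P_{2}$ for $F_{3}$, so their vanishing is the skew-symmetry of $(P_{0}^{\top}S_{1}-S_{0}^{\top}P_{1})$ and of $(P_{1}^{\top}S_{2}-S_{1}^{\top}P_{2})$ respectively. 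Finally $F_{2}=(S_{0}^{\top}P_{2}-S_{1}^{\top}P_{1}+S_{2}^{\top}P_{0})-(P_{0}^{\top}S_{2}-P_{1}^{\top}S_{1}+P_{2}^{\top}S_{0})$, which equals $-\big[C-C^{\top}\big]$ with $C=P_{0}^{\top}S_{2}-S_{0}^{\top}P_{2}+S_{1}^{\top}P_{1}$, so $F_{2}=0$ is precisely the symmetry of $C$.

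There is no genuine obstacle here: the argument is entirely a matter of careful bookkeeping of the expansion. The only point needing attention is the consistent use of the transpose identity $(P_{i}^{\top}S_{j})^{\top}=S_{j}^{\top}P_{i}$ when regrouping each coefficient into a single matrix plus (or minus) its transpose, so that the symmetry/skew-symmetry reformulation is legitimate; the structural identity $F^{\top}(-s)=-F(s)$ then serves as a convenient consistency check, guaranteeing that the even coefficients indeed land as symmetry conditions and the odd ones as skew-symmetry conditions.
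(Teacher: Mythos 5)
Your proposal is correct and is precisely the ``straightforward calculation'' the paper alludes to without writing out: expanding $S^{\top}(-s)P(s)-P^{\top}(-s)S(s)$ in powers of $s$ and equating each coefficient to zero, with the even/odd coefficients regrouping into symmetry/skew-symmetry conditions exactly as you describe. All five coefficient identifications check out, and the observation $F^{\top}(-s)=-F(s)$ is a nice structural bonus explaining the pattern.
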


These conditions are immediately checked on the examples of the symplectic
formulation of the elastic rod (\ref{eq:VibrRod_Sympl_DiffQ}) and
of the rod with non-local elasticity relations (\ref{eq:HamSysNonlocalElasticityRelation}).

\begin{example}
\label{exa:The-elastic-rod_symplectic_ReciprocOperators}
Consider the example of the elastic rod with local elasticity relations as presented in Section \ref{subsec:Symplectic-formulation-of-rod}. The latent variable is the vector composed of the displacement $u$ of the rod and the momentum density $p$
$$\xi=\left(\begin{array}{c}
u\\
p
\end{array}\right)$$ The coenergy variables are defined by the matrix $\mathcal{Q}\left(\frac{\partial}{\partial z}\right)$
 in (\ref{eq:VibrRod_Sympl_DiffQ}). Hence the second order reciprocal operator defining the phase space variables (\ref{eq:SecondOrderSymOp}) is defined by the matrices $P_0 =I_{2}$,  $P_1 =P_2=0_{2}$ and $S_0=\left(\begin{array}{cc}
k & 0\\
0 & \frac{1}{\left(\rho A\right)}
\end{array}\right)$,
$S_1 =0_{2}$
and
$S_2=\left(\begin{array}{cc}
-T & 0\\
0 & 0
\end{array}\right)$ which obviously satisfy the reciprocity conditions of Corollary \ref{corr:ReciprocityCond2ndOrderOperators}.
\end{example}

\subsection{Lagrangian subspaces associated with maximally reciprocal operators}

Generalizing the image representation of Lagrangian vector subspaces
in the finite dimensional case \cite{SCL_18}, we shall define a class
of Lagrangian subspaces associated with reciprocal $\left(2n\times n\right)$
matrix differential operators $\mathcal{R}$. For this purpose we
shall consider a subclass of reciprocal operators satisfying a maximality
condition. 
\begin{definition}
A matrix $\left(2n\times n\right)$ differential operator $\mathcal{R}$
(\ref{eq:MatrixOperatorConstRelEnergy}) is \emph{maximally reciprocal}
if it satisfies the reciprocity condition (\ref{eq:ReciprocityConditionOperators})
and the maximality condition 
\begin{equation}
\textrm{Im}\,\mathcal{R}=\textrm{ker}\, \mathcal{R}^{*}\Theta_{n}\label{eq:MaximalityConditionOperator}
\end{equation}
Equivalently, the associated polynomial matrix $R\left(s\right)$
satisfies 
\begin{equation}
\textrm{rank}\,R\left(s\right)=n\quad\forall s\in\mathbb{C}\label{eq:MaximalityConditionPolynomialMatrix}
\end{equation}
\end{definition}

\begin{note}
Note that from the reciprocity condition (\ref{eq:ReciprocityConditionOperators}), it follows that $\textrm{Im}\,\mathcal{R}\subset\textrm{ker}\, \mathcal{R}^{*}\Theta_{n}$. Hence, for reciprocal operators, the maximality condition (\ref{eq:MaximalityConditionOperator}) is equivalent to $\textrm{Im}\,\mathcal{R}\supset\textrm{ker}\, \mathcal{R}^{*}\Theta_{n}$.
\end{note}

With this maximality condition, the graph of the constitutive relations
(\ref{eq:ConstRelEnergyImage}) defines a Lagrangian subspace. 
\begin{proposition}
\label{prop:LagrangeSubspaceDiff_Homogeneous} Consider a $\left(2n\times n\right)$
maximally reciprocal matrix differential operator $\mathcal{R}$ (\ref{eq:MatrixOperatorConstRelEnergy}),
then the subspace $\mathcal{L}_{\mathcal{R}}^{0}$ of the constitutive
relations (\ref{eq:SubspaceConstRel}) is a Lagrangian subspace with
respect to the alternate form (\ref{eq:AlternateBilinearForm}), where
the state and energy spaces are identified as $\mathcal{X}=\mathcal{E}=L_{2}\left(\left[a,\,b\right],\,\mathbb{R}^{n\times n}\right)$
and the pairing $\left\langle \:|\:\right\rangle $ is the usual $L_2$ inner product. 
\end{proposition}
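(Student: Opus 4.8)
The plan is to establish the defining equality $\mathcal{L}_{\mathcal{R}}^{0}=(\mathcal{L}_{\mathcal{R}}^{0})^{\bot_{-}}$ of a Lagrangian subspace (Definition \ref{def:Lagrangian-subspace}, equation (\ref{eq:LagrangianCoIsotropy})) as two separate inclusions. The inclusion $\mathcal{L}_{\mathcal{R}}^{0}\subset(\mathcal{L}_{\mathcal{R}}^{0})^{\bot_{-}}$ requires no new work: a maximally reciprocal operator satisfies, by definition, the reciprocity condition (\ref{eq:ReciprocityConditionOperators}), and Proposition \ref{prop:Reciprocity} asserts precisely that this condition is equivalent to the isotropy $\mathcal{L}_{\mathcal{R}}^{0}\subset(\mathcal{L}_{\mathcal{R}}^{0})^{\bot_{-}}$. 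Hence everything hinges on the reverse inclusion, and this is where the maximality condition (\ref{eq:MaximalityConditionOperator}) must be used.

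For the reverse inclusion I would take an arbitrary pair $(x_{1},e_{1})\in(\mathcal{L}_{\mathcal{R}}^{0})^{\bot_{-}}$ and translate the orthogonality into a differential constraint. Every element of $\mathcal{L}_{\mathcal{R}}^{0}$ is of the form $(x_{2},e_{2})=\mathcal{R}\xi_{2}=(\mathcal{P}\xi_{2},\mathcal{S}\xi_{2})$ with $\xi_{2}\in H_{0}^{M}$ (using the decomposition (\ref{eq:PolynomialMatrixDecomposition})), so the vanishing of the alternate form (\ref{eq:AlternateBilinearForm}) reads
\[
\langle e_{1},\mathcal{P}\xi_{2}\rangle_{L^{2}}-\langle\mathcal{S}\xi_{2},x_{1}\rangle_{L^{2}}=0\qquad\text{for all }\xi_{2}\in H_{0}^{M}.
\]
Since it suffices to test against $\xi_{2}$ with support strictly inside $]a,b[$, the defining property of the formal adjoint recalled in Section \ref{sec:Reminder2PolynomialCalculus} lets me transfer $\mathcal{P}$ and $\mathcal{S}$ onto $e_{1}$ and $x_{1}$ without boundary remainders, yielding $\langle\mathcal{S}^{*}x_{1}-\mathcal{P}^{*}e_{1},\xi_{2}\rangle_{L^{2}}=0$. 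As this holds for every compactly supported test function, it forces $\mathcal{S}^{*}x_{1}-\mathcal{P}^{*}e_{1}=0$, which by the definition (\ref{eq:SymplecticMatrix}) of $\Theta_{n}$ is exactly $\mathcal{R}^{*}\Theta_{n}(x_{1},e_{1})^{\top}=0$, i.e. $(x_{1},e_{1})\in\ker\mathcal{R}^{*}\Theta_{n}$. Invoking the maximality condition (\ref{eq:MaximalityConditionOperator}), $\ker\mathcal{R}^{*}\Theta_{n}=\mathrm{Im}\,\mathcal{R}$, I then obtain a latent variable $\xi_{1}$ with $(x_{1},e_{1})=\mathcal{R}\xi_{1}$, so that $(x_{1},e_{1})\in\mathcal{L}_{\mathcal{R}}^{0}$, as required.

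The main obstacle is the functional-analytic reading of the maximality step, which is where I expect the real care to be needed. The relation $\mathcal{R}^{*}\Theta_{n}(x_{1},e_{1})^{\top}=0$ is derived only in the weak (distributional) sense from an a priori merely $L^{2}$ pair $(x_{1},e_{1})$, whereas membership in $\mathcal{L}_{\mathcal{R}}^{0}$ demands a genuine latent variable $\xi_{1}\in H_{0}^{M}$; one must therefore know that $\ker\mathcal{R}^{*}\Theta_{n}$ and $\mathrm{Im}\,\mathcal{R}$ coincide at the level of these function spaces, including the correct regularity and boundary behaviour of the recovered $\xi_{1}$. This is precisely the purpose of the equivalent pointwise rank condition (\ref{eq:MaximalityConditionPolynomialMatrix}), $\mathrm{rank}\,R(s)=n$ for all $s\in\mathbb{C}$: via the Smith form it guarantees a polynomial left inverse of $R(s)$, so that $\mathcal{R}$ is injective with closed image and the latent variable is reconstructed by a differential operator, giving both the surjectivity onto $\ker\mathcal{R}^{*}\Theta_{n}$ and the required regularity of $\xi_{1}$. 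I would either cite the behavioral image-representation results of \cite{Willems_SIAM98,Schaft_SIAM11} for this equivalence or isolate it as a separate lemma, and verify the trivial inclusion $\mathrm{Im}\,\mathcal{R}\subset\ker\mathcal{R}^{*}\Theta_{n}$ directly from reciprocity (as already noted in the remark following the definition of maximally reciprocal operators), so that only the content-bearing inclusion $\ker\mathcal{R}^{*}\Theta_{n}\subset\mathrm{Im}\,\mathcal{R}$ remains.
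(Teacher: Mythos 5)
Your proposal is correct and follows essentially the same route as the paper's own proof: isotropy is obtained from Proposition \ref{prop:Reciprocity}, and co-isotropy by testing the alternate form against elements $\mathcal{R}\xi$ with $\xi$ compactly supported, passing to the formal adjoints to conclude membership in $\ker\mathcal{R}^{*}\Theta_{n}$, and then invoking the maximality condition (\ref{eq:MaximalityConditionOperator}) to land in $\mathrm{Im}\,\mathcal{R}$ (the paper merely swaps the roles of the indices $1$ and $2$). Your additional remarks on the regularity of the recovered latent variable and the role of the rank condition (\ref{eq:MaximalityConditionPolynomialMatrix}) are a welcome sharpening of a step the paper leaves implicit.
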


\begin{proof}
\begin{flushleft}
The isotropy condition $\mathcal{L}_{\mathcal{R}}^{0}\subset\left(\mathcal{L}_{\mathcal{R}}^{0}\right)^{\bot}$
is satisfied in view of the reciprocity of the operator $\mathcal{R}$
and Proposition \ref{prop:Reciprocity}. Now consider the co-isotropy
condition $\left(\mathcal{L}_{\mathcal{R}}^{0}\right)^{\bot}\subset\mathcal{L}_{\mathcal{R}}^{0}$
. Consider an element $\left(x_{2},\,e_{2}\right)\in L^{2}\left(\left[a,\,b\right],\,\mathbb{R}^{n}\right)^{2}$
such that the product (\ref{eq:AlternateBilinearForm}) with any $\left(x_{1},\,e_{1}\right)\in\mathcal{L}_{\mathcal{R}}^{0}$
vanishes. Then for $\xi_{1}\in H_{0}^{M}\left(a,\,b,\,\mathbb{R}^{n}\right)$,
the alternate product is computed as 

\begin{eqnarray*}
\left\langle e_{1}|x_{2}\right\rangle -\left\langle e_{2}|x_{1}\right\rangle  & = & \int_{a}^{b}\left(e_{1}^{\top}\,x_{2}-e_{2}^{\top}\,x_{1}\right)\;dz\\
 & = & \int_{a}^{b}\left(\left(\mathcal{S}\xi_{1}\right)^{\top}\,x_{2}-e_{2}^{\top}\,\left(\mathcal{P}\xi_{1}\right)\right)\;dz\\
 & = & \int_{a}^{b} \left(\mathcal{R}\xi_{1}\right)^{\top} \Theta_n \left(\begin{array}{c}
x_{2}\\
e_{2}
\end{array}\right)  \;dz
\end{eqnarray*}
Being $0$ for all $\xi_{1}\in H_{0}^{N}\left(a,\,b,\,\mathbb{R}^{n}\right)$
, implies  $\left(\begin{array}{c}
x_{2}\\
e_{2}
\end{array}\right)\in\ker\mathcal{R}^{*}\Theta_n$ and by the maximality condition (\ref{eq:MaximalityConditionOperator}),
then $\left(\begin{array}{c}
x_{2}\\
e_{2}
\end{array}\right)\in\textrm{Im}\,\mathcal{R}$.
Thus $\left(\mathcal{L}_{\mathcal{R}}^{0}\right)^{\bot}\subset\mathcal{L}_{\mathcal{R}}^{0}$,
and the subspace $\mathcal{L}_{\mathcal{R}}^{0}$ is Lagrangian. 

\par\end{flushleft}
\end{proof}

\subsection{Stokes-Lagrange subspaces associated with maximally reciprocal operators}

When the energy and effort variables are not vanishing on the boundary,
then the graph $\mathcal{L}_{\mathcal{R}}^{0}$ of the constitutive
relations (\ref{eq:ConstRelEnergyImage}) is no more a Lagrangian
subspace. This means, analogously to the Stokes-Dirac structure case,
that the reciprocity conditions have to be \emph{extended} by including
boundary variables.

Define the \emph{bilinear differential operator}, denoted by $B_{\bar{\Phi}}$,
and such that for any pair $\left(\xi_{1},\,\xi_{2}\right)$ of elements
in $H^{M}\left(a,\,b,\,\mathbb{R}^{n}\right)$  
\begin{eqnarray}
B_{\bar{\Phi}}\left(\xi_{1},\,\xi_{2}\right) & = & \xi_{1}^{\top}\left[R^{\top}\left(-\frac{\partial}{\partial z}\right)\Theta_{n}R\left(\frac{\partial}{\partial z}\right)\right]\xi_{2}\label{eq:BilinearDiffOperator}\\
 & = & \xi_{1}^{\top}\left[S^{\top}\left(-\frac{\partial}{\partial z}\right)P\left(\frac{\partial}{\partial z}\right)-P^{\top}\left(-\frac{\partial}{\partial z}\right)S\left(\frac{\partial}{\partial z}\right)\right]\xi_{2}
\end{eqnarray}
and denote by $\bar{\Phi}_{k,l}\in\mathbb{R}^{2n\times2n}$ its \emph{coefficient
matrices.} 

Recalling the expression of the bilinear operator (\ref{eq:BilinearDiffOperator}),
a direct calculation gives the expression of the two-variable polynomial
matrix $\bar{\Phi}\left(\zeta,\mu\right)$, in terms of the polynomial matrix
$R\left(s\right)$ associated with the reciprocal operator $\mathcal{R}$
as follows 
\begin{equation}
\bar{\Phi}\left(\zeta,\eta\right)=R^{\top}\left(\zeta\right)\Theta_{n}R\left(\eta\right)\label{eq:2VarPolynomialMatrix_R}
\end{equation}

Define its derivative, denoted by $B_{\bar{\Psi}}\left(\xi_{1},\,\xi_{2}\right)$,
according to the definition (\ref{eq:DerivativeDiffOperator}), and
denote its coefficient matrix by $\bar{\Psi}\left(\zeta,\eta\right)$. Note
that this matrix is skew-symmetric as a consequence of the skew-symmetry
of $\Theta_{n}$ . Furthermore it admits a canonical symplectic decomposition
as stated in the following theorem. 
\begin{proposition}
\label{prop:BoundaryPolynomialMatrix}There exist an integer $p$
and a $\left(2p\times n\right)$ polynomial matrix $R_{\partial}\left(s\right)$
of full row-rank, which defines the following canonical factorization
of the two-variable polynomial matrix $\bar{\Psi}\left(\zeta,\eta\right)$
\\

\begin{equation}
\bar{\Psi}\left(\zeta,\eta\right)=R_{\partial}^{\top}\left(\zeta\right)\Theta_{p}R_{\partial}\left(\eta\right)\label{eq:FactorBoundary2Polynomial}
\end{equation}
where $\Theta_{p}$ denotes the symplectic matrix (\ref{eq:SymplecticMatrix})
of order $2p$. 
\end{proposition}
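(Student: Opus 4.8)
The plan is to mirror the construction of the Stokes-Dirac boundary factorization (\ref{eq:Boundary2Polynomial_J_Factorization}), where a \emph{symmetric} two-variable polynomial matrix was written as $T^{\top}(\zeta)\Sigma T(\eta)$; here the object $\bar{\Psi}(\zeta,\eta)$ is \emph{skew}-symmetric, so the signature matrix $\Sigma$ must be replaced by the symplectic matrix $\Theta_{p}$ of (\ref{eq:SymplecticMatrix}). First I would record precisely the skew-symmetry already announced in the text. Since $\Theta_{n}^{\top}=-\Theta_{n}$, the matrix $\bar{\Phi}(\zeta,\eta)=R^{\top}(\zeta)\Theta_{n}R(\eta)$ of (\ref{eq:2VarPolynomialMatrix_R}) satisfies $\bar{\Phi}^{\top}(\eta,\zeta)=-\bar{\Phi}(\zeta,\eta)$; dividing the identity $\bar{\Phi}(\zeta,\eta)=(\zeta+\eta)\bar{\Psi}(\zeta,\eta)$ by the symmetric factor $(\zeta+\eta)$ transfers this skew-symmetry to $\bar{\Psi}$, the quotient being unique because a polynomial matrix annihilated by $(\zeta+\eta)$ vanishes. (The very existence of the quotient is guaranteed by the reciprocity condition (\ref{eq:ReciprocityConditionsPolynomialMatrix}), which is exactly the statement $\bar{\Phi}(-s,s)=R^{\top}(-s)\Theta_{n}R(s)=0$, i.e.\ divisibility by $(\zeta+\eta)$.) Consequently the coefficient matrix $\widetilde{\bar{\Psi}}$ is a genuine real skew-symmetric square matrix.

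The core step is then a purely linear-algebraic factorization of this constant skew-symmetric matrix. I would invoke the symplectic (Darboux) normal form for a possibly degenerate skew form: for any real skew-symmetric $W$ there is an invertible $C$ with $C^{\top}WC=\mathrm{blockdiag}(\Theta_{p},0)$, where $2p=\mathrm{rank}\,W$ is necessarily even. Writing $\Pi=[\,I_{2p}\ \ 0\,]$, so that $\mathrm{blockdiag}(\Theta_{p},0)=\Pi^{\top}\Theta_{p}\Pi$, and setting $\widetilde{R}_{\partial}=\Pi C^{-1}$ (the first $2p$ rows of $C^{-1}$) yields $W=\widetilde{R}_{\partial}^{\top}\Theta_{p}\widetilde{R}_{\partial}$, with $\widetilde{R}_{\partial}$ of full row rank $2p$ since $C^{-1}$ is invertible. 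Applying this to $W=\widetilde{\bar{\Psi}}$ gives $\widetilde{\bar{\Psi}}=\widetilde{R}_{\partial}^{\top}\Theta_{p}\widetilde{R}_{\partial}$; the choice $2p=\mathrm{rank}\,\widetilde{\bar{\Psi}}$ is precisely the minimality condition, and from $\mathrm{rank}\,\widetilde{\bar{\Psi}}\le\mathrm{rank}\,\widetilde{R}_{\partial}\le 2p$ it forces $\widetilde{R}_{\partial}$ to have full row rank.

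It remains to lift this constant factorization to the level of the two-variable polynomial matrix, using the factorization observation recalled in Section \ref{sec:Reminder2PolynomialCalculus}. Partitioning $\widetilde{R}_{\partial}=[\,(R_{\partial})_{0}\ \cdots\ (R_{\partial})_{M}\,]$ into $2p\times n$ coefficient blocks and setting $R_{\partial}(s)=\sum_{i}(R_{\partial})_{i}s^{i}$, I take $Y(s)=R_{\partial}(s)$ and $X(s)=-\Theta_{p}R_{\partial}(s)$, so that $X^{\top}(\zeta)=R_{\partial}^{\top}(\zeta)\Theta_{p}$ (using $\Theta_{p}^{\top}=-\Theta_{p}$). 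Then $\widetilde{\bar{\Psi}}=\widetilde{X}^{\top}\widetilde{Y}$, and the observation gives $\bar{\Psi}(\zeta,\eta)=X^{\top}(\zeta)Y(\eta)=R_{\partial}^{\top}(\zeta)\Theta_{p}R_{\partial}(\eta)$, which is the claimed factorization (\ref{eq:FactorBoundary2Polynomial}). The full row rank of $R_{\partial}(s)$ is inherited from that of its constant coefficient matrix $\widetilde{R}_{\partial}$.

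I expect the main obstacle to be the skew-symmetric canonical factorization \emph{with a full-row-rank factor}, i.e.\ Darboux's theorem applied to a degenerate skew form on the radical complement, since this is where the evenness of $\mathrm{rank}\,\widetilde{\bar{\Psi}}$ (and hence the existence of a single integer $p$) is produced. The second delicate point is bookkeeping: the factorization observation is stated for products $\widetilde{X}^{\top}\widetilde{Y}$, so the symplectic middle factor $\Theta_{p}$ must be absorbed into one of the two polynomial factors, which is the reason for the asymmetric choice $X(s)=-\Theta_{p}R_{\partial}(s)$, $Y(s)=R_{\partial}(s)$ above. Everything else — the skew-symmetry of $\bar{\Psi}$ and the passage between coefficient matrices and polynomial matrices — is routine.
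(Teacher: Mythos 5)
Your proof is correct and is precisely the argument the paper intends: the paper's own proof is a one-line citation to the factorization results of \cite{Willems_SIAM98,Schaft_SIAM11} ``adapted using the skew-symmetry of $\bar{\Psi}$'', and your route --- skew-symmetry of the coefficient matrix $\widetilde{\bar{\Psi}}$, the Darboux normal form $C^{\top}\widetilde{\bar{\Psi}}C=\mathrm{blockdiag}(\Theta_{p},0)$ yielding $\widetilde{\bar{\Psi}}=\widetilde{R}_{\partial}^{\top}\Theta_{p}\widetilde{R}_{\partial}$ with $2p=\mathrm{rank}\,\widetilde{\bar{\Psi}}$, followed by the lift to two-variable polynomial matrices --- is exactly that adaptation. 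The only point worth making explicit is that ``full row-rank'' in the statement must be read as full row rank of the \emph{coefficient} matrix $\widetilde{R}_{\partial}$ (equivalently, minimality of the factorization), not of $R_{\partial}(s)$ over $\mathbb{R}(s)$, as the paper's own Example \ref{exa:Rod_EnergyBoundaryVariablesDiffQ} with $R_{\partial}(s)=\left(\begin{smallmatrix}1 & 0\\ -Ts & 0\end{smallmatrix}\right)$ shows; under that reading your last step is immediate rather than an inheritance argument.
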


\begin{proof}
This follows by slightly adapting the proof as given in \cite[Sec.3]{Willems_SIAM98}\cite[Sec 2.3]{Schaft_SIAM11}
and using the skew-symmetry of $\bar{\Psi}\left(\zeta,\eta\right)$. 
\end{proof}
Using the polynomial matrix $R\left(s\right)$ and (\ref{eq:BilinearDiffOperator}),
the formula (\ref{eq:IntegrationbyParts}) may be written 
\begin{equation}
\xi_{1}^{\top}\left[R^{\top}\left(-\frac{\partial}{\partial z}\right)\Theta_{n}R\left(\frac{\partial}{\partial z}\right)\right]\xi_{2}=\frac{\partial}{\partial z}\left(\left(R_{\partial}\left(\frac{\partial}{\partial z}\right)\xi_{1}\right)^{\top}\Theta_{p}R_{\partial}\left(\frac{\partial}{\partial z}\right)\xi_{2}\right)\label{eq:IntByParts_BoundaryEnergyVar}
\end{equation}
Using the natural decomposition, induced by the symplectic matrix
$\Theta_{p}$ , of the boundary polynomial matrix into two $\left(p\times n\right)$
polynomial matrices 
\begin{equation}
R_{\partial}\left(s\right)=\left(\begin{array}{c}
P_{\partial}\left(s\right)\\
S_{\partial}\left(s\right)
\end{array}\right)\label{eq:BoundaryMapRecip_Decomposed}
\end{equation}
the preceding equality may also be written as 
\begin{equation}
S^{T}(\zeta)P(\eta)-P^{T}(\zeta)S(\eta)=(\zeta+\eta)\big[S_{\partial}^{T}(\zeta)P_{\partial}(\eta)-P_{\partial}^{T}(\zeta)S_{\partial}(\eta)\big]\label{eq:FundamentalRelation_BoundaryMap_StokesLagrange}
\end{equation}

We shall use this boundary operator in order to generalize the Lagrangian
subspaces associated with reciprocal operators for functions that
are not vanishing on the boundary. 
\begin{proposition}
\label{prop:StokesLagrangeSubspace}{[}Stokes-Lagrange subspace{]}
Consider the extended state space $\mathcal{C}=L^{2}\left(\left[a,\,b\right],\,\mathbb{R}^{n}\right)^{2}\times\:\mathbb{R}^{p}\times\mathbb{R}^{p}$
. The subspace associated with a maximally reciprocal operator $\mathcal{R}$
(\ref{eq:MatrixOperatorConstRelEnergy}) 
\begin{eqnarray}
\mathcal{L}_{\mathcal{R}} & = & \left\{ \left(x,\,e,\chi_{\partial},\,\varepsilon_{\partial}\right)\in\mathcal{C}/\:\exists\xi\in H^{M}\left(\left[a,\,b\right]\,\mathbb{R}^{n}\right)\:\textrm{s.t.}\;\right.\label{eq:SubspaceConstRel-1}\\
 &  & \left.\left(\begin{array}{c}
x\\
e
\end{array}\right)=\mathcal{R}\,\xi;\quad\left(\begin{array}{c}
\chi_{\partial}\\
\varepsilon_{\partial}
\end{array}\right)=\mathrm{tr}\left(\left(\begin{array}{c}
P_{\partial}(\frac{\partial}{\partial z})\\
S_{\partial}(\frac{\partial}{\partial z})
\end{array}\right)\xi\right)\right\} \label{eq:EnergyBoundaryPortVariables}
\end{eqnarray}
where $\mathrm{tr}$ denotes the trace operator (\ref{eq:TraceOperatorDef}) and the polynomial
matrices $P_{\partial}(s)$ and $S_{\partial}(s)$ are defined in Proposition \ref{prop:BoundaryPolynomialMatrix},
is a Lagrangian subspace of the extended state space $\mathcal{C}$
with respect to the alternate product for any two pairs of elements
$c_{i}=\left(x_{i},\,e_{i},\chi_{\partial i},\,\varepsilon_{\partial i}\right)$,
$i\in\left\{ 1,2\right\} $ 
\begin{equation}
\left\llbracket c_{1},c_{2}\right\rrbracket =\left\langle e_{1}|x_{2}\right\rangle -\left\langle e_{2}|x_{1}\right\rangle -\left[\chi_{\partial1}^{\top}\varepsilon_{\partial2}-\chi_{\partial2}^{\top}\varepsilon_{\partial1}\right]_{a}^{b}\label{eq:AlternateProductWithBoundary}
\end{equation}
\end{proposition}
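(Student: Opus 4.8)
The plan is to follow the two-step pattern of Proposition~\ref{prop:LagrangeSubspaceDiff_Homogeneous}, establishing the isotropy inclusion $\mathcal{L}_{\mathcal{R}}\subset\mathcal{L}_{\mathcal{R}}^{\perp}$ and the co-isotropy inclusion $\mathcal{L}_{\mathcal{R}}^{\perp}\subset\mathcal{L}_{\mathcal{R}}$ with respect to $\llbracket\cdot,\cdot\rrbracket$, but now \emph{retaining} the boundary remainders produced by integration by parts instead of discarding them. The computational engine is the bilinear differential operator attached to the two-variable polynomial matrix $\bar{\Phi}(\zeta,\eta)=R^{\top}(\zeta)\Theta_{n}R(\eta)$ of (\ref{eq:2VarPolynomialMatrix_R}): for $c_{i}=(x_{i},e_{i},\chi_{\partial i},\varepsilon_{\partial i})\in\mathcal{L}_{\mathcal{R}}$ with latent variables $\xi_{i}$, a direct expansion of the coefficient matrices gives $D_{\bar{\Phi}}(\xi_{1},\xi_{2})(z)=(\mathcal{R}\xi_{1})^{\top}\Theta_{n}(\mathcal{R}\xi_{2})=e_{1}^{\top}x_{2}-e_{2}^{\top}x_{1}$, so that the $L^{2}$-part of the alternate product is exactly the integrated form $\mathbf{D}_{\bar{\Phi}}(\xi_{1},\xi_{2})=\langle e_{1}\,|\,x_{2}\rangle-\langle e_{2}\,|\,x_{1}\rangle$.

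For the isotropy inclusion I would first observe that the reciprocity condition (\ref{eq:ReciprocityConditionsPolynomialMatrix}) says precisely that $\bar{\Phi}(\zeta,\eta)$ vanishes on $\zeta+\eta=0$, hence that $\bar{\Phi}$ is divisible by $(\zeta+\eta)$. Lemma~\ref{lem:Derivative-of-QuadraticDiffForm} then identifies $D_{\bar{\Phi}}$ as the total $z$-derivative of $D_{\bar{\Psi}}$, and the canonical factorization (\ref{eq:FactorBoundary2Polynomial}) of Proposition~\ref{prop:BoundaryPolynomialMatrix} writes $D_{\bar{\Psi}}(\xi_{1},\xi_{2})=(R_{\partial}(\tfrac{\partial}{\partial z})\xi_{1})^{\top}\Theta_{p}(R_{\partial}(\tfrac{\partial}{\partial z})\xi_{2})$. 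Integrating, the $L^{2}$-part collapses to the boundary remainder $[(R_{\partial}\xi_{1})^{\top}\Theta_{p}R_{\partial}\xi_{2}]_{a}^{b}$, which, after inserting the decomposition (\ref{eq:BoundaryMapRecip_Decomposed}) of $R_{\partial}$ and recognizing the boundary port variables $\chi_{\partial}=\mathrm{tr}(P_{\partial}(\tfrac{\partial}{\partial z})\xi)$ and $\varepsilon_{\partial}=\mathrm{tr}(S_{\partial}(\tfrac{\partial}{\partial z})\xi)$ of (\ref{eq:EnergyBoundaryPortVariables}), is exactly the boundary bilinear form $[\chi_{\partial 1}^{\top}\varepsilon_{\partial 2}-\chi_{\partial 2}^{\top}\varepsilon_{\partial 1}]_{a}^{b}$ subtracted in (\ref{eq:AlternateProductWithBoundary}). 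With the sign convention fixed by $\Theta_{p}$, the two contributions cancel and $\llbracket c_{1},c_{2}\rrbracket=0$.

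The co-isotropy inclusion I would split into a volume step and a boundary step. Given $c_{2}$ with $\llbracket c_{1},c_{2}\rrbracket=0$ for all $c_{1}\in\mathcal{L}_{\mathcal{R}}$, I would first test only against those $c_{1}$ whose latent variable $\xi_{1}\in H_{0}^{M}$ vanishes together with its derivatives at $a$ and $b$. All boundary terms drop out and the condition reduces to $\langle e_{1}\,|\,x_{2}\rangle-\langle e_{2}\,|\,x_{1}\rangle=0$ for every such $\xi_{1}$; this is verbatim the computation in the proof of Proposition~\ref{prop:LagrangeSubspaceDiff_Homogeneous}, so the stacked pair $(x_{2},e_{2})$ lies in $\ker\mathcal{R}^{*}\Theta_{n}$ and the maximality condition (\ref{eq:MaximalityConditionOperator}) furnishes a $\xi_{2}\in H^{M}$ with $(x_{2},e_{2})=\mathcal{R}\xi_{2}$. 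It then remains to identify the prescribed boundary data $(\chi_{\partial 2},\varepsilon_{\partial 2})$ with $\mathrm{tr}(R_{\partial}(\tfrac{\partial}{\partial z})\xi_{2})$. Reinstating a \emph{general} $\xi_{1}\in H^{M}$ and using the identity of the first paragraph to rewrite the $L^{2}$-part as the $\xi_{2}$-boundary remainder, the orthogonality condition becomes, for all $\xi_{1}$, a linear relation pairing $\mathrm{tr}(R_{\partial}(\tfrac{\partial}{\partial z})\xi_{1})$ against the difference between the prescribed $(\chi_{\partial 2},\varepsilon_{\partial 2})$ and $\mathrm{tr}(R_{\partial}(\tfrac{\partial}{\partial z})\xi_{2})$.

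The hard part will be this last identification. It hinges on two facts: that the boundary trace map $\xi_{1}\mapsto\mathrm{tr}(R_{\partial}(\tfrac{\partial}{\partial z})\xi_{1})$ is \emph{surjective} onto the boundary variable space, which I would establish from the full row-rank of $R_{\partial}(s)$ guaranteed by Proposition~\ref{prop:BoundaryPolynomialMatrix} by choosing $\xi_{1}$ with prescribed values of $\xi_{1},\dots,\partial^{M-1}\xi_{1}/\partial z^{M-1}$ at $a$ and $b$; and that the pairing induced by $\Theta_{p}$ is non-degenerate. Together these force the bracketed difference to vanish, i.e.\ $(\chi_{\partial 2},\varepsilon_{\partial 2})=\mathrm{tr}(R_{\partial}(\tfrac{\partial}{\partial z})\xi_{2})$, so that $c_{2}\in\mathcal{L}_{\mathcal{R}}$. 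This surjectivity step is the exact analogue of the minimality/full-rank argument used to close the Stokes--Dirac proof in Proposition~\ref{prop:StokesDiracStrucNorderLinearOrderOp}, and it is the only place where one must argue carefully about the boundary function spaces rather than merely manipulate polynomial matrices.
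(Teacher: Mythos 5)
Your proposal is correct and follows essentially the same route as the paper's proof: isotropy via the integration-by-parts identity $D_{\bar{\Phi}}=\tfrac{d}{dz}D_{\bar{\Psi}}$ together with the factorization (\ref{eq:FactorBoundary2Polynomial}), and co-isotropy by first testing against latent variables in $H_{0}^{M}$ to invoke maximality as in Proposition \ref{prop:LagrangeSubspaceDiff_Homogeneous}, then closing the boundary identification via surjectivity of the trace of $R_{\partial}(\tfrac{\partial}{\partial z})$ (from minimality/full row rank) and non-degeneracy of $\Theta_{p}$. The only difference is that you spell out the isotropy step in more detail than the paper, which dispatches it in one line.
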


\begin{proof}
\begin{flushleft}
The isotropy condition $\mathcal{L}_{\mathcal{R}}\subset\left(\mathcal{L}_{\mathcal{R}}\right)^{\bot}$
is satisfied in view of the integration by parts formula (\ref{eq:IntByParts_BoundaryEnergyVar}).
Now consider the co-isotropy condition $\left(\mathcal{L}_{\mathcal{R}}\right)^{\bot}\subset\mathcal{L}_{\mathcal{R}}$
. Consider any element $c_{2}=\left(x_{2},\,e_{2},\chi_{\partial2},\,\varepsilon_{\partial2}\right)\in\mathcal{C}$
such that the product (\ref{eq:AlternateBilinearForm}) with any $c_{1}=\left(x_{1},\,e_{1},\chi_{\partial1},\,\varepsilon_{\partial1}\right)\in\mathcal{L}_{\mathcal{R}}$
vanishes. Then for some $\xi_{1}\in H^{M}\left(\left[a,\,b\right],\,\mathbb{R}^{n}\right)$,
the alternate product may be computed as 
\begin{eqnarray*}
\left\llbracket c_{1},c_{2}\right\rrbracket  & = & \left\langle e_{1}|x_{2}\right\rangle -\left\langle e_{2}|x_{1}\right\rangle -\left[\chi_{\partial1}^{\top}\varepsilon_{\partial2}-\chi_{\partial2}^{\top}\varepsilon_{\partial1}\right]_{a}^{b}\\
 & = & \int_{a}^{b}\left(\left(R\left(\frac{\partial}{\partial z}\right)\xi_{1}\right)^{\top}\Theta_{n}\left(\begin{array}{c}
x_{2}\\
e_{2}
\end{array}\right)\right)\;dz-\left[\left(R_{\partial}\left(\frac{\partial}{\partial z}\right)\xi_{1}\right)^{\top}\Theta_{p}\left(\begin{array}{c}
\chi_{\partial2}\\
\varepsilon_{\partial2}
\end{array}\right)\right]_{a}^{b}
\end{eqnarray*}
Consider first $\xi_{1}\in H_{0}^{N}\left(\left[a,\,b\right],\,\mathbb{R}^{n}\right)$. Then the second term in the alternate product vanishes and we may follow
 the proof of Proposition \ref{prop:LagrangeSubspaceDiff_Homogeneous},
 and conclude that, since the operator $\mathcal{R}$ is assumed to be maximally reciprocal,
there exists an element $\xi_{2}\in H^{N}\left(a,\,b,\,\mathbb{R}^{n}\right)$
that generates $\left(\begin{array}{c}
x_{2}\\
e_{2}
\end{array}\right)$ according to (\ref{eq:ConstRelEnergyImage}) . As a result, using
(\ref{eq:IntByParts_BoundaryEnergyVar}), the alternate product becomes
\begin{eqnarray*}
\left\llbracket c_{1},c_{2}\right\rrbracket  & = & \int_{a}^{b}\left(\left(R\left(\frac{\partial}{\partial z}\right)\xi_{1}\right)^{\top}\Theta_{n}\left(R\left(\frac{\partial}{\partial z}\right)\xi_{2}\right)\right)\;dz\\
&  &  -\left[\left(R_{\partial}\left(\frac{\partial}{\partial z}\right)\xi_{1}\right)^{\top}\Theta_{p}\left(\begin{array}{c}
\chi_{\partial2}\\
\varepsilon_{\partial2}
\end{array}\right)\right]_{a}^{b}\\
 & = & \left[\left(R_{\partial}\left(\frac{\partial}{\partial z}\right)\xi_{1}\right)^{\top}\Theta_{p}\left(R_{\partial}\left(\frac{\partial}{\partial z}\right)\xi_{2}-\left(\begin{array}{c}
\chi_{\partial2}\\
\varepsilon_{\partial2}
\end{array}\right)\right)\right]_{a}^{b}
\end{eqnarray*}
Since $R_{\partial}\left(s\right)$ defines a minimal factorization of
the boundary 2-polynomial $\bar{\Psi}\left(\zeta,\mu\right)$ and has full
row rank characteristic matrix, the image of operator $R_{\partial}\left(\frac{\partial}{\partial z}\right)$
at the boundaries $\left\{ a,b\right\} $ is $\mathbb{R}^{2p}$. If
$\left\llbracket c_{1},c_{2}\right\rrbracket =0$ for any $\xi_{1}\in H^{N}\left(a,\,b,\,\mathbb{R}^{n}\right)$
, since the symplectic matrix $\Theta_{p}$ is full-rank, $R_{\partial}\left(\frac{\partial}{\partial z}\right)\xi_{2}-\left(\begin{array}{c}
\chi_{\partial2}\\
\varepsilon_{\partial2}
\end{array}\right)=0$. Hence $c_{2}$ satisfies (\ref{eq:SubspaceConstRel-1}) and (\ref{eq:EnergyBoundaryPortVariables})
and $c_{2}\in\mathcal{L}_{\mathcal{R}}$. This concludes the proof. 
\par\end{flushleft}

\end{proof}
In the sequel, the variables defined by the boundary operator $R_{\partial}\left(s\right)$
in (\ref{eq:EnergyBoundaryPortVariables}) will be called the \emph{energy
boundary port variables}. Indeed, for physical systems models, the
product of the energy boundary port variables of the Stokes-Lagrange
subspace has dimension of \emph{energy}, contrary to the product
of the port variables of the Stokes-Dirac structure which has dimension
of \emph{power}. 
\begin{example}
\label{exa:Rod_EnergyBoundaryVariablesDiffQ} Consider again the symplectic
formulation of the flexible rod with local elasticity relation presented in
Section \ref{subsec:Symplectic-formulation-of-rod}. Recall that, cf. Example \ref{exa:The-elastic-rod_symplectic_ReciprocOperators}, the vector of latent variables is 
 $\xi=\left(\begin{array}{c}
u\\
p
\end{array}\right)$ where $u$ is the displacement a point of the rod and $p$ the momentum density and
the polynomial matrix, associated with the reciprocal operator, is
$R\left(s\right)=\left(\begin{array}{c}
P\left(s\right)\\
S\left(s\right)
\end{array}\right)$ with $P\left(s\right)=I_{2}$ and $S\left(s\right)=\left(\begin{array}{cc}
\left(k-T\,s^{2}\right) & 0\\
0 & \frac{1}{\left(\rho A\right)}
\end{array}\right)$. From the two-variable polynomial matrix (\ref{eq:2VarPolynomialMatrix_R})
associated with $R\left(s\right)$, the boundary two-variable polynomial
matrix $\bar{\Psi}\left(\zeta,\eta\right)$ is easily computed according
to (\ref{eq:DerivativeOperatorCoeffcientMatrix}), and factorized
as in (\ref{eq:FactorBoundary2Polynomial}), as follows 
\[
\bar{\Psi}\left(\zeta,\eta\right)=\left(\begin{array}{cc}
-T\left(\zeta-\eta\right) & 0\\
0 & 0
\end{array}\right)=R_{\partial}^{\top}\left(\zeta\right)\Theta_{1}R_{\partial}\left(\eta\right)
\]
with $R_{\partial}\left(s\right)=\left(\begin{array}{cc}
1 & 0\\
-T\,s & 0
\end{array}\right)$ which admits the decomposition into $P_{\partial}(s)=\left(\begin{array}{cc}
1 & 0\end{array}\right)$ and $S_{\partial}(s)=\left(\begin{array}{cc}
-Ts & 0\end{array}\right)$ and the boundary port variable are then 
\begin{equation}
\left(\begin{array}{c}
\chi_{\partial}\\
\varepsilon_{\partial}
\end{array}\right)=\mathrm{tr}\left(\left(\begin{array}{c}
P_{\partial}(\frac{\partial}{\partial z})\\
S_{\partial}(\frac{\partial}{\partial z})
\end{array}\right)\xi\right)=\mathrm{tr}\left(\begin{array}{c}
u\\
-T\frac{\partial u}{\partial z}
\end{array}\right),\label{eq:FkexibleRodSymplectic_EnergyBPvar}
\end{equation}
which are the displacement $u$ and the stress $\sigma$ (\ref{eq:LocalElasticityRelation})
at the boundaries. 
\end{example}

\section{\label{sec:BPHS_LagrangeSibspaceDiff} Boundary port-Hamiltonian
systems defined on Stokes-Lagrange subspaces}

First, we define the generalization of boundary port-Hamiltonian
systems \cite[chap.4]{schaftGeomPhys02,Geoplex09}, and more specifically
linear boundary port-Hamiltonian systems \textit{red}{on a one-dimensional spatial domain as} defined in \cite{LeGorrecSIAM05}, by defining
the constitutive relations of the energy \emph{implicitly } using the
constitutive relation (\ref{eq:ConstRelEnergyImage}) and no longer
the linear map (\ref{eq:CoenergyExplicitLinear}). 
Second, we characterize the class of Hamiltonian functionals whose variational derivatives define the same
reciprocal operator, and within this class we identify a functional for which a balance equation in terms of the power and the energy boundary port variables holds. 
Third, we illustrate in detail this definition on the examples of the elastic rod with
local and non-local elasticity relations. Finally we conclude the section by indicating the relations between the various Hamiltonian representations presented
in this section, transforming one representation into the other.

\subsection{Definition}
In this section, we shall define a generalization of the class of boundary port-Hamiltonian
systems as presented in Section \ref{subsec:Boundary-Port-HamiltonianSyst}
by using the Stokes-Lagrange subspaces $\mathcal{L}_{\mathcal{R}}$
defined in Proposition \ref{prop:StokesLagrangeSubspace}. This definition
will first be given in geometric terms and then in terms of a system
of partial differential-algebraic equations. Then we shall relate
this system of partial differential-algebraic equations with the definition
of descriptor port-Hamiltonian systems suggested in \cite{Beattie_MMCS_2018}.

\begin{definition}
\label{def:Linear_Lagrangian_BPHS}{[} Boundary port-Hamiltonian
system on a Stokes-Lagrange subspace{]} Let $\mathcal{X}=L^{2}\left(\left[a,\,b\right],\,\mathbb{R}^{n}\right)$
be the state space, $\mathcal{R}$ a maximally reciprocal operator as in (\ref{eq:MatrixOperatorConstRelEnergy}),
$\mathcal{J}$ an  $N$th-order
Hamiltonian matrix differential operator $\mathcal{J}$ as in
(\ref{eq:N_OrderLinearHamOp}). Denote by
 \\  $\mathscr{D}_{\mathcal{J}}$
 the Stokes-Dirac structure (\ref{eq:StokesDiracConstEq}) associated with $\mathcal{J}$ and 
 \\ $\mathcal{L}_{\mathcal{R}}$ the Stokes-Lagrange subspace (\ref{prop:StokesLagrangeSubspace}) generated by $\mathcal{R}$\\
 A \emph{linear boundary port-Hamiltonian system} is defined by the dynamical system 
\begin{equation}
\left(\left(\begin{array}{c}
\frac{\partial x}{\partial t}\\
f_{\partial}
\end{array}\right),\left(\begin{array}{c}
e\\
e_{\partial}
\end{array}\right)\right)\in\mathscr{D}_{\mathcal{J}}\qquad\textrm{and}\qquad\left(\left(\begin{array}{c}
x\\
\chi_{\partial}
\end{array}\right),\left(\begin{array}{c}
e\\
\varepsilon_{\partial}
\end{array}\right)\right)\in\mathcal{L}_{\mathcal{R}}\label{eq:LagrangeBPHS_Geometric}
\end{equation}
where $e\in\mathcal{E}=L^{2}\left(\left[a,\,b\right],\,\mathbb{R}^{n}\right)$
is called the vector of \emph{co-energy variables}, $\left(f_{\partial},\,e_{\partial}\right)\in\mathbb{R}^{p}\times\mathbb{R}^{p}$
are called \emph{power boundary port variables} and $\left(\chi_{\partial},\,e_{\partial}\right)\in\mathbb{R}^{m}\times\mathbb{R}^{m}$
are called \emph{energy boundary port variables.} 
\end{definition}

\begin{note}
Note that this definition is a strict extension of Definition
\ref{def:Linear_BPHS} of a boundary port-Hamiltonian system defined
with respect to a Stokes-Dirac structure by (\ref{eq:BPHS_GeometricDef}),
where the co-energy variable $e$ is no longer a (linear) function
of the state $x$, but instead $e$ and $x$ are related by the Stokes-Lagrange
subspace $\mathcal{L}_{\mathcal{R}}$. Furthermore, the interaction
of the system with its environment at the boundary is not only by
the power-conjugate port variables (as in (\ref{eq:BPHS_GeometricDef})),
but also by a novel pair of \emph{energy-conjugate} variables $\left(\chi_{\partial},\,\varepsilon_{\partial}\right)$
associated with the Stokes-Lagrange subspace.
\end{note}

Using the image representation of the Stokes-Lagrange subspace (\ref{eq:ConstRelEnergyImage})
and the kernel representation of the Stokes-Dirac structure (\ref{eq:StokesDiracConstEq}),
the geometric definition of boundary port-Hamiltonian systems (\ref{eq:LagrangeBPHS_Geometric}),
gives rise to the following system of equations 
\begin{eqnarray}
\frac{\partial}{\partial t}\mathcal{P}\xi & = & \mathcal{J}\mathcal{S}\xi\label{eq:LagrangeBPHS_PDE}\\
\left(\begin{array}{c}
f_{\partial}\\
e_{\partial}
\end{array}\right) & = & \frac{1}{\sqrt{2}}\left(\begin{array}{cc}
\Sigma & \Sigma\\
\Sigma & -\Sigma
\end{array}\right)\mathrm{tr}\left(T\left(\frac{\partial}{\partial z}\right)e\right)\label{eq:LagrangeBPHS_SD_BoundaryPortVar}\\
\left(\begin{array}{c}
\chi_{\partial}\\
\varepsilon_{\partial}
\end{array}\right) & = & \mathrm{tr}\left(\left(\begin{array}{c}
P_{\partial}(\frac{\partial}{\partial z})\\
S_{\partial}(\frac{\partial}{\partial z})
\end{array}\right)\xi\right)\label{eq:LagrangeBPHS_Lagrange_BoundaryVariables}
\end{eqnarray}

\subsection{Hamiltonian functionals and their balance equation}

In this section we shall be interested in deriving a Hamiltonian functional which corresponds the total energy for physical systems' models 
and their balance equation for the boundary port-Hamiltonian systems defined above (with
respect to a Stokes-Dirac structure and a Stokes-Lagrange subspace).
Indeed in Definition \ref{def:Linear_Lagrangian_BPHS} of a boundary port-Hamiltonian systems on a Stokes-Lagrange subspace, 
contrary to Definition \ref{def:Linear_BPHS}, no Hamiltonian
functional is defined. We shall therefore first show how to associate Hamiltonian
functionals to Stokes-Lagrange subspaces (\ref{eq:SubspaceConstRel-1}).
Second, we identify a Hamiltonian functional for which a balance equation can be derived involving two
types of boundary port variables. Namely, those associated with the Stokes-Lagrange
subspace, and those associated with the Stokes-Dirac structure. Again
this will be illustrated on the elastic rod and nanorod examples.

\subsubsection{Case of a reciprocal operator of degree $0$}

Let us first discuss the case when the reciprocal operator $\mathcal{R}$
in (\ref{eq:MatrixOperatorConstRelEnergy}) is a matrix differential
operator of order $0$. Hence $R\left(s\right)=R_{0}=\left(\begin{array}{c}
P_{0}\\
S_{0}
\end{array}\right)$ is simply a real-valued $2n \times n$ matrix. In
this case the Hamiltonian functional is defined, in a way that is completely
similar to the finite-dimensional case \cite{SCL_18,Beattie_MMCS_2018},
as 
\begin{equation}
\mathfrak{H}\left(\xi\right)=\frac{1}{2}\int_{a}^{b}\xi^{\top}\left(\frac{R_{0}^{\top}\Xi_{n}R_{0}}{2}\right)\xi\,dz\label{eq:Hamiltonian_MatrixReciprocalMatrix}
\end{equation}
where 
\begin{equation}
\Xi_{n}=\left(\begin{array}{cc}
0_{n} & I_{n}\\
I_{n} & 0_{n}
\end{array}\right)\label{eq:KreinProduct}
\end{equation}

\begin{note}
Note that according to the reciprocity condition (\ref{eq:ReciprocityCondOperator0})
the Hamiltonian density function is defined by the following\emph{
symmetric} matrix 
\[
\frac{1}{2}\left(R_{0}^{\top}\Xi_{n}R_{0}\right)=\frac{1}{2}\left(P_{0}^{\top}S_{0}+S_{0}^{\top}P_{0}\right)=S_{0}^{\top}P_{0}=P_{0}^{\top}S_{0}
\]
Furthermore in the case when $P_{0}=I_{n}$ , one recovers precisely
the definition of the energy (\ref{eq:EnergyFunctionQuadratic}) with
$S_{0}=Q_{0}$, for boundary port-Hamiltonian systems defined on Stokes-Dirac
structures.
\end{note}

Hence the variational derivative of the Hamiltonian is $\frac{\delta\mathfrak{H}}{\delta\xi}\left(\xi\right)=S_{0}^{\top}P_{0}\xi$,
and one derives the time-derivative of the Hamiltonian 
\[
\frac{d\mathfrak{H}}{dt}=\int_{a}^{b}\frac{\delta\mathfrak{H}}{\delta\xi}\left(\xi\right)\frac{\partial\xi}{\partial t}\,dz=\int_{a}^{b}\left(P_{0}\xi\right)^{\top}S_{0}\frac{\partial\xi}{\partial t}\,dz=\int_{a}^{b}e^{\top}\frac{\partial x}{\partial t}\,dz
\]
The isotropy condition (\ref{eq:IsotropyCond_StokesDirac}) of the
Stokes-Dirac structure, yields the \emph{energy balance equation}
\[
\frac{d}{dt}\mathfrak{H}\left(\xi\right)=\int_{a}^{b}e^{\top}\frac{\partial x}{\partial t}\,dz=e_{\partial}^{\top}\,f_{\partial}
\]
Since the Stokes-Lagrange subspace is defined by an operator $\mathcal{R}$
of degree $0$, there are no energy port boundary variables.
One recovers the balance equation obtained for the class of boundary
port variables presented in \cite{LeGorrecSIAM05} and illustrated
by the example presented in Section \ref{subsec:Boundary-PH-formulation-Rod}.

\subsubsection{Case of a reciprocal operator of differential degree $\protect\geq1$}

Let us now consider the general case of maximally reciprocal matrix
differential operators (\ref{eq:MatrixOperatorConstRelEnergy}). The
first question is how to characterize the Hamiltonian functional associated
with the reciprocal operator (\ref{eq:MatrixOperatorConstRelEnergy})
and defining the constitutive relations (\ref{eq:ConstRelEnergyImage})
between the energy and co-energy variables. A natural extension of the
Hamiltonian (\ref{eq:Hamiltonian_MatrixReciprocalMatrix}) is 
\begin{equation}
\mathfrak{H}_{0}\left(\xi\right)=\frac{1}{2}\int_{a}^{b}\xi^{\top}\frac{R^{\top}\left(-\frac{\partial}{\partial z}\right)\Xi_{n}R\left(\frac{\partial}{\partial z}\right)}{2}\xi\,dz\label{eq:NaturalHamiltonian}
\end{equation}

Its variational derivative may be expressed as follows, using Lemma \ref{lem:DiffQuadFunctPolynom} and the expression (\ref{eq:VariationalDer_Polynomial}) of the associated two-variable polynomial matrix, 
\begin{equation}
H\left(-s,s\right)=\frac{1}{2}R^{\top}\left(-s\right)\Xi_{n}R\left(s\right)=S^{\top}\left(-s\right)P\left(s\right)\label{eq:2Polynomial_NonLocalConstRelMatrices}
\end{equation}
where the expression involving the polynomial matrices $P\left(s\right)$ and $S\left(s\right)$ is obtained by using the reciprocity condition (\ref{eq:ReciprocityConditionsPolynomialMatrix}).

However, Hamiltonian functionals 
\begin{equation}
\mathfrak{H}\left(\xi\right)=\frac{1}{2}\int_{a}^{b}\sum_{i,j=1}^{M}\frac{\partial^{i}\xi}{\partial z^{i}}^{\top}\,H_{ij}\frac{\partial^{j}\xi}{\partial z^{j}}\,dz\label{eq:QuadraticHamiltonian}
\end{equation}
defined by a \emph{symmetric} two-variable polynomial matrix
$H\left(\varsigma,\,\eta\right)$, satisfying (\ref{eq:2Polynomial_NonLocalConstRelMatrices}),
are not uniquely defined.
In fact, the general solution $H\left(\varsigma,\,\eta\right)$
of equation (\ref{eq:2Polynomial_NonLocalConstRelMatrices}) is 
\begin{equation}
H\left(\varsigma,\,\eta\right)=\frac{1}{2}R^{\top}\left(\varsigma\right)\Xi_{n}R\left(\eta\right)+\left(\varsigma+\eta\right)\Gamma\left(\varsigma,\,\eta\right)\label{eq:2Polynomials_general_ExplicitRecOp}
\end{equation}
where $\Gamma\left(\varsigma,\,\eta\right)$ is an arbitrary symmetric
two-variable polynomial matrix of degree $\left(M-1\right)$. 

In the following proposition, we identify the Hamiltonian
functions that satisfies a balance equation in terms of the power and
energy boundary port variables. 
\begin{proposition}
Consider a linear boundary port-Hamiltonian system as in Definition
\ref{def:Linear_Lagrangian_BPHS} and define the Hamiltonian \textup{$\mathfrak{H}\left(\xi\right)$}
associated with the two-variable polynomial matrix as
\begin{equation}
H\left(\varsigma,\,\eta\right)=\frac{R^{\top}\left(\varsigma\right)\Xi_{n}R\left(\eta\right)}{2}-\left(\varsigma+\eta\right)\frac{R_{\partial}^{\top}\left(\varsigma\right)\Xi_{p}R_{\partial}\left(\eta\right)}{2}\label{eq:CanonicalHamiltonian}
\end{equation}
where $R\left(s\right)$ is the polynomial matrix associated with
the reciprocal operator defining the Lagrangian subspace and $R_{\partial}\left(s\right)$
is its boundary operator (\ref{eq:BoundaryMapRecip_Decomposed}).
This Hamiltonian satisfies the balance equation 
\begin{eqnarray}
\frac{d\mathfrak{H}}{dt} & = & e_{\partial}^{\top}\,f_{\partial}-\left[\varepsilon_{\partial}^{\top}\frac{d\chi_{\partial}}{dt}\right]_{a}^{b}\label{eq:BalanceHamiltonian}
\end{eqnarray}
\end{proposition}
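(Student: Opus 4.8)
The plan is to differentiate $\mathfrak{H}$ in time through its two-variable polynomial representation, isolate a single interior term that the Stokes--Dirac isotropy converts into $e_{\partial}^{\top}f_{\partial}$, and show that every remaining contribution is a boundary term collapsing to $-[\varepsilon_{\partial}^{\top}\dot\chi_{\partial}]_{a}^{b}$. Since $H(\varsigma,\eta)$ in \eqref{eq:CanonicalHamiltonian} is symmetric (both $R^{\top}(\varsigma)\Xi_{n}R(\eta)$ and $R_{\partial}^{\top}(\varsigma)\Xi_{p}R_{\partial}(\eta)$ are symmetric, and $(\varsigma+\eta)$ is symmetric), Lemma \ref{lem:DiffQuadFunctPolynom} gives
\begin{equation}
\frac{d\mathfrak{H}}{dt}=\int_{a}^{b}D_{H}\!\left(\xi,\dot\xi\right)dz,\qquad\dot\xi:=\frac{\partial\xi}{\partial t}.
\end{equation}
Using that $\mathcal{P},\mathcal{S}$ have constant coefficients, so that $x=\mathcal{P}\xi$, $e=\mathcal{S}\xi$ yield $\dot x=\mathcal{P}\dot\xi$, $\dot e=\mathcal{S}\dot\xi$, I would split $H=H^{(1)}-H^{(2)}$ with $H^{(1)}=\tfrac12 R^{\top}(\varsigma)\Xi_{n}R(\eta)$ and $H^{(2)}=(\varsigma+\eta)\tfrac12 R_{\partial}^{\top}(\varsigma)\Xi_{p}R_{\partial}(\eta)$ and treat each term separately.

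For $H^{(1)}$, a pointwise expansion of $\Xi_{n}$ gives $D_{H^{(1)}}(\xi,\dot\xi)=\tfrac12(x^{\top}\dot e+e^{\top}\dot x)$, which I would rewrite as $e^{\top}\dot x+\tfrac12(x^{\top}\dot e-e^{\top}\dot x)$. The antisymmetric part equals $-D_{\bar\Phi}(\xi,\dot\xi)$ for $\bar\Phi(\varsigma,\eta)=R^{\top}(\varsigma)\Theta_{n}R(\eta)$ as in \eqref{eq:2VarPolynomialMatrix_R}, since
\begin{equation}
\left(\begin{array}{c}x\\e\end{array}\right)^{\top}\Theta_{n}\left(\begin{array}{c}\dot x\\\dot e\end{array}\right)=e^{\top}\dot x-x^{\top}\dot e .
\end{equation}
Now $\bar\Phi$ is divisible by $(\varsigma+\eta)$ (this is exactly reciprocity \eqref{eq:ReciprocityConditionsPolynomialMatrix}, evaluated at $\eta=-\varsigma$) with quotient $\bar\Psi=R_{\partial}^{\top}(\varsigma)\Theta_{p}R_{\partial}(\eta)$ by Proposition \ref{prop:BoundaryPolynomialMatrix}; hence Lemma \ref{lem:Derivative-of-QuadraticDiffForm} collapses its integral to a boundary term,
\begin{equation}
\int_{a}^{b}\tfrac12\left(x^{\top}\dot e-e^{\top}\dot x\right)dz=-\tfrac12\left[(R_{\partial}\xi)^{\top}\Theta_{p}(R_{\partial}\dot\xi)\right]_{a}^{b}=\tfrac12\left[\chi_{\partial}^{\top}\dot\varepsilon_{\partial}-\varepsilon_{\partial}^{\top}\dot\chi_{\partial}\right]_{a}^{b},
\end{equation}
where $\chi_{\partial}=P_{\partial}(\tfrac{\partial}{\partial z})\xi$ and $\varepsilon_{\partial}=S_{\partial}(\tfrac{\partial}{\partial z})\xi$. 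The remaining piece uses the dynamics \eqref{eq:LagrangeBPHS_PDE}, $\dot x=\mathcal{J}e$, together with the Stokes--Dirac isotropy \eqref{eq:IsotropyCond_StokesDirac} to obtain $\int_{a}^{b}e^{\top}\dot x\,dz=\int_{a}^{b}e^{\top}\mathcal{J}e\,dz=e_{\partial}^{\top}f_{\partial}$.

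For the divisible part $H^{(2)}=(\varsigma+\eta)\Psi^{(2)}$ with $\Psi^{(2)}=\tfrac12 R_{\partial}^{\top}(\varsigma)\Xi_{p}R_{\partial}(\eta)$, Lemma \ref{lem:Derivative-of-QuadraticDiffForm} applies directly and yields
\begin{equation}
\int_{a}^{b}D_{H^{(2)}}(\xi,\dot\xi)\,dz=\left[D_{\Psi^{(2)}}(\xi,\dot\xi)\right]_{a}^{b}=\tfrac12\left[\chi_{\partial}^{\top}\dot\varepsilon_{\partial}+\varepsilon_{\partial}^{\top}\dot\chi_{\partial}\right]_{a}^{b}.
\end{equation}
Collecting the three contributions,
\begin{equation}
\frac{d\mathfrak{H}}{dt}=e_{\partial}^{\top}f_{\partial}+\tfrac12\left[\chi_{\partial}^{\top}\dot\varepsilon_{\partial}-\varepsilon_{\partial}^{\top}\dot\chi_{\partial}\right]_{a}^{b}-\tfrac12\left[\chi_{\partial}^{\top}\dot\varepsilon_{\partial}+\varepsilon_{\partial}^{\top}\dot\chi_{\partial}\right]_{a}^{b}=e_{\partial}^{\top}f_{\partial}-\left[\varepsilon_{\partial}^{\top}\dot\chi_{\partial}\right]_{a}^{b},
\end{equation}
the $\chi_{\partial}^{\top}\dot\varepsilon_{\partial}$ terms cancelling and the $\varepsilon_{\partial}^{\top}\dot\chi_{\partial}$ terms adding, which is precisely \eqref{eq:BalanceHamiltonian}.

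The main obstacle is the boundary bookkeeping rather than any single computation. The interior antisymmetric term unavoidably produces $\tfrac12[\chi_{\partial}^{\top}\dot\varepsilon_{\partial}-\varepsilon_{\partial}^{\top}\dot\chi_{\partial}]_{a}^{b}$, and a \emph{naive} Hamiltonian such as $\mathfrak{H}_{0}$ in \eqref{eq:NaturalHamiltonian} would leave this uncancelled, giving a balance law not expressible through $\dot\chi_{\partial}$ alone. The correction $-(\varsigma+\eta)\tfrac12 R_{\partial}^{\top}(\varsigma)\Xi_{p}R_{\partial}(\eta)$ in \eqref{eq:CanonicalHamiltonian} is engineered so that its boundary contribution cancels the symmetric half and doubles the antisymmetric half; the step deserving care is verifying that pairing the \emph{symmetric} $\Xi_{p}$ (for $H^{(2)}$) against the \emph{skew} $\Theta_{p}$ (from the factorization of $\bar\Psi$) yields exactly the matched split of $\chi_{\partial}^{\top}\dot\varepsilon_{\partial}\pm\varepsilon_{\partial}^{\top}\dot\chi_{\partial}$. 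One should also record that $H$ in \eqref{eq:CanonicalHamiltonian} lies in the admissible family \eqref{eq:2Polynomials_general_ExplicitRecOp}, i.e. $H(-s,s)=S^{\top}(-s)P(s)$, which holds because the factor $(\varsigma+\eta)$ annihilates the correction on $\varsigma=-\eta$ and reciprocity \eqref{eq:ReciprocityConditionsPolynomialMatrix} disposes of the remainder.
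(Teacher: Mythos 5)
Your proof is correct and follows essentially the same route as the paper's: a single application of the integration-by-parts identity \eqref{eq:IntByParts_BoundaryEnergyVar} built on the factorization $\bar{\Psi}(\zeta,\eta)=R_{\partial}^{\top}(\zeta)\Theta_{p}R_{\partial}(\eta)$ of Proposition \ref{prop:BoundaryPolynomialMatrix}, followed by the Stokes--Dirac isotropy to produce $e_{\partial}^{\top}f_{\partial}$; your symmetric/antisymmetric splitting via $\Xi_{n}$ and $\Theta_{n}$ is merely a repackaging of the paper's direct manipulation of the $P,S$ (resp.\ $P_{\partial},S_{\partial}$) components. The only blemish is the prose identifying the antisymmetric part $\tfrac12\left(x^{\top}\dot e-e^{\top}\dot x\right)$ with $-D_{\bar{\Phi}}(\xi,\dot\xi)$ instead of $-\tfrac12 D_{\bar{\Phi}}(\xi,\dot\xi)$, but the displayed equation that follows carries the correct factor, so the conclusion is unaffected.
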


\begin{proof}
First recall that $R^{\top}\left(\varsigma\right)\Xi_{n}R\left(\eta\right)=P^{\top}\left(\varsigma\right)S\left(\eta\right)+S^{\top}\left(\varsigma\right)P\left(\eta\right)$
and that $R_{\partial}^{\top}\left(\varsigma\right)\Xi_{p}R_{\partial}\left(\eta\right)=P_{\partial}^{\top}\left(\varsigma\right)S_{\partial}\left(\eta\right)+S_{\partial}^{\top}\left(\varsigma\right)P_{\partial}\left(\eta\right)$.
The derivative of the Hamiltonian associated with the two-variable polynomial matrix (\ref{eq:CanonicalHamiltonian})
is then 
\begin{eqnarray*}
\frac{d\mathfrak{H}}{dt} & = & \frac{1}{2}\int_{a}^{b}\left(P\left(\frac{\partial}{\partial z}\right)\xi\right)^{\top}S\left(\frac{\partial}{\partial z}\right)\frac{\partial\xi}{\partial t}\,dz+\frac{1}{2}\int_{a}^{b}\left(S\left(\frac{\partial}{\partial z}\right)\xi\right)^{\top}P\left(\frac{\partial}{\partial z}\right)\frac{\partial\xi}{\partial t}\,dz\\
 &  & -\frac{1}{2}\left[\left(P_{\partial}\left(\frac{\partial}{\partial z}\right)\xi\right)^{\top}S_{\partial}\left(\frac{\partial}{\partial z}\frac{\partial\xi}{\partial t}\right)+\left(S_{\partial}\left(\frac{\partial}{\partial z}\right)\xi\right)^{\top}P_{\partial}\left(\frac{\partial}{\partial z}\right)\frac{\partial\xi}{\partial t}\right]_{a}^{b}
\end{eqnarray*}
Applying (\ref{eq:IntByParts_BoundaryEnergyVar}) with $\xi_1=\xi$ and
$\xi_2=\frac{\partial\xi}{\partial t}$, one has
\begin{eqnarray*}
\int_{a}^{b}\left(P\left(\frac{\partial}{\partial z}\right)\xi\right)^{\top}S\left(\frac{\partial}{\partial z}\right)\frac{\partial\xi}{\partial t}\,dz = \int_{a}^{b}\left(S\left(\frac{\partial}{\partial z}\right)\xi\right)^{\top}P\left(\frac{\partial}{\partial z}\right)\frac{\partial\xi}{\partial t}\,dz \\
 -\left[\left(S_{\partial}\left(\frac{\partial}{\partial z}\right)\xi\right)^{\top}P_{\partial}\left(\frac{\partial}{\partial z}\right)\frac{\partial\xi}{\partial t}-\left(S_{\partial}\left(\frac{\partial}{\partial z}\right)\frac{\partial\xi}{\partial t}\right)^{\top}P_{\partial}\left(\frac{\partial}{\partial z}\right)\xi\right]_{a}^{b}
\end{eqnarray*}
Hence, one obtains 
\begin{eqnarray*}
\frac{d\mathfrak{H}}{dt} & = & \int_{a}^{b}\left(S\left(\frac{\partial}{\partial z}\right)\xi\right)^{\top}P\left(\frac{\partial}{\partial z}\right)\frac{\partial\xi}{\partial t}\,dz\\
 &  & -\frac{1}{2}\left[\left(S_{\partial}\left(\frac{\partial}{\partial z}\right)\xi\right)^{\top}P_{\partial}\left(\frac{\partial}{\partial z}\right)\frac{\partial\xi}{\partial t}-\left(S_{\partial}\left(\frac{\partial}{\partial z}\right)\frac{\partial\xi}{\partial t}\right)^{\top}P_{\partial}\left(\frac{\partial}{\partial z}\right)\xi\right]_{a}^{b}\\
 &  & -\frac{1}{2}\left[\left(S_{\partial}\left(\frac{\partial}{\partial z}\right)\xi\right)^{\top}P_{\partial}\left(\frac{\partial}{\partial z}\right)\frac{\partial\xi}{\partial t}+\left(S_{\partial}\left(\frac{\partial}{\partial z}\right)\frac{\partial\xi}{\partial t}\right)^{\top}P_{\partial}\left(\frac{\partial}{\partial z}\right)\xi\right]_{a}^{b}\\
 & = & \int_{a}^{b}\left(S\left(\frac{\partial}{\partial z}\right)\xi\right)^{\top}P\left(\frac{\partial}{\partial z}\right)\frac{\partial\xi}{\partial t}\,dz-\left[\left(S_{\partial}\left(\frac{\partial}{\partial z}\right)\xi\right)^{\top}P_{\partial}\left(\frac{\partial}{\partial z}\right)\frac{\partial\xi}{\partial t}\right]_{a}^{b}
\end{eqnarray*}
Permuting the derivatives with respect to time and space, using the
constitutive relations of the energy and co-energy variables (\ref{eq:ConstRelEnergyImage}),
and the isotropy condition (\ref{eq:IsotropyCond_StokesDirac_expanded})
of the Stokes-Dirac structure, one obtains the balance equation 

\begin{eqnarray*}
\frac{d\mathfrak{H}}{dt}  & = & \int_{a}^{b}\left(S\left(\frac{\partial}{\partial z}\right)\xi\right)^{\top}\,\frac{\partial}{\partial t}P\left(\frac{\partial}{\partial z}\right)\xi\,dz-\left[\left(S_{\partial}\left(\frac{\partial}{\partial z}\right)\xi\right)^{\top}\,\frac{\partial}{\partial t}P_{\partial}\left(\frac{\partial}{\partial z}\right)\xi\right]_{a}^{b}\\
 & = & \int_{a}^{b}e^{\top}\frac{\partial x}{\partial t}\,dz-\left[\varepsilon_{\partial}^{\top}\frac{d\chi_{\partial}}{dt}\right]_{a}^{b}\\
 & = & e_{\partial}^{\top}\,f_{\partial}-\left[\varepsilon_{\partial}^{\top}\frac{d\chi_{\partial}}{dt}\right]_{a}^{b}
\end{eqnarray*}

\end{proof}

The particular choice of the Hamiltonian (\ref{eq:CanonicalHamiltonian}) is primarily motivated by the balance equation (\ref{eq:BalanceHamiltonian}) in terms of the power boundary variables of the Stokes-Dirac structure and the energy port variables of the Stokes-Lagrange subspace. Indeed if one would choose a Hamiltonian (\ref{eq:2Polynomials_general_ExplicitRecOp}) with an arbitrary symmetric two-variable polynomial matrix $\Gamma\left(\varsigma,\,\eta\right)$, the boundary terms in its balance equation are no more expressed in terms of the boundary port variables. The multiplicity of possible boundary terms (modulo integration by parts) in the energy balance equations is well-known and has been discussed in the context of second-order field theories \cite{Schoeberl_JMathPhysics2018}. However, there might be other choices of the Hamiltonian functional where the boundary terms of its balance equation depend on the boundary port variables. For instance, choose  $\Gamma\left(\varsigma,\,\eta\right)=0$ or equivalently the Hamiltonian functional (\ref{eq:Hamiltonian_MatrixReciprocalMatrix}). Then, using the same calculations as in the proof, one obtains the following balance equation
\begin{equation}
\frac{d\mathfrak{H}_{0}}{dt} =e_{\partial}^{\top}\,f_{\partial} -  \frac{1}{2} \left[\varepsilon_{\partial}^{\top}\frac{d\chi_{\partial}}{dt} - \frac{d\varepsilon_{\partial}}{dt}^{\top}\chi_{\partial} \right]_{a}^{b} \label{eq:BalanceHamiltonian_MatrixReciprocalMatrix}
\end{equation}
This expression is more complex than the balance equation, mixing the time derivative of both energy port variables. It appears also that for physical systems' models, the expression of the Hamiltonian $\mathfrak{H}_{0}$ (\ref{eq:Hamiltonian_MatrixReciprocalMatrix}) does not correspond to the usual physical expression but rather the Hamiltonian (\ref{eq:CanonicalHamiltonian}). We shall illustrate this in Remark \ref{note:DiscussionHamiltoniansElasticRod} on the example of the elastic rod.

\subsection{Examples}

As a first example, consider again the symplectic formulation of the
elastic rod with local elasticity relations. 
\begin{example}
\label{exa:ElasticRodSympl_LagrangeBPHS} Consider the symplectic formulation
of the elastic rod presented in Section \ref{subsec:Symplectic-formulation-of-rod}.
In this case the Hamiltonian operator is the symplectic matrix. Hence
it contains no derivations and the associated Dirac structure is just
the graph of a skew-symmetric matrix and there are no power boundary
variables associated with it. The state is $x^{\top}=\left(\begin{array}{cc}
u & p\end{array}\right)$, where $u$ is the displacement of a point of the rod and $p$ the
momentum. Furthermore, the reciprocal operator $\mathcal{R}$ defining
the constitutive relations in (\ref{eq:ConstRelEnergyImage}), is
the differential operator defined by the operators $\mathcal{P}=P_{0}=I_{2}$
and the differential operator $\mathcal{S}=\textrm{diag\ensuremath{\left(k-T\left(\frac{\partial}{\partial z}\right)^{2},\,\frac{1}{\rho A}\right)}}$. The boundary port-Hamiltonian system is then given by the Hamiltonian system (\ref{eq:VibrRod_Sympl_DiffQ}) augmented with the \emph{energy}
boundary variables derived in Example \ref{exa:Rod_EnergyBoundaryVariablesDiffQ}
\[
\left(\begin{array}{c}
\chi_{\partial}\\
\varepsilon_{\partial}
\end{array}\right)=\mathrm{tr}\left(\begin{array}{c}
u\left(z\right)\\
T\frac{\partial u}{\partial z}\left(z\right)
\end{array}\right)
\]
being the displacement $u$ and the stress at the boundaries.

Let us now define the Hamiltonian functional associated with the reciprocal
operator $\mathcal{R}$ and derive its balance equation. Calculating
the two-variable polynomial matrix(\ref{eq:CanonicalHamiltonian}),
one obtains 
\begin{eqnarray*}
H\left(\varsigma,\,\eta\right) & = & \left(\begin{array}{cc}
k-\frac{1}{2}T\left(\varsigma^{2}+\eta^{2}\right) & 0\\
0 & \frac{1}{\rho A}
\end{array}\right)-\frac{1}{2}\left(\varsigma+\eta\right)\left(\begin{array}{cc}
-T\left(\varsigma+\eta\right) & 0\\
0 & 0
\end{array}\right)\\
 & = & \left(\begin{array}{cc}
k+\varsigma\eta & 0\\
0 & \frac{1}{\rho A}
\end{array}\right)
\end{eqnarray*}
and, by (\ref{eq:QuadraticHamiltonian}), this defines the Hamiltonian
\begin{equation}
\mathfrak{H}\left(u\right)=\frac{1}{2}\int_{a}^{b}\left(k\,u^{2}+T\left(\frac{\partial u}{\partial z}\right)^{2}+\frac{p^{2}}{\rho A}\right)\,dz
\label{eq:TotalEnergyElasticRodLocal}
\end{equation}
which is precisely the mechanical energy consisting of the sum of
the elastic energy, the structural elastic energy, and the kinetic
energy defined in the physical model in Example \ref{subsec:Symplectic-formulation-of-rod}.

The energy balance equation, derived from (\ref{eq:BalanceHamiltonian})
and the energy boundary variables (\ref{eq:FkexibleRodSymplectic_EnergyBPvar}),
is 
\begin{equation}
\frac{d\mathfrak{H}}{dt}=-\left[-\left(T\frac{\partial u}{\partial z}\right)\,\frac{du}{dt}\right]_{a}^{b}=\left[\left(T\frac{\partial u}{\partial z}\right)\,\frac{du}{dt}\right]_{a}^{b}
\label{eq:BalanceEqFlexRod}
\end{equation}
where the right-hand side is the mechanical power at the boundary
of the rod: the product of the strain with the velocity. 
\end{example}

\begin{note} \label{note:DiscussionHamiltoniansElasticRod}
In this example, we shall illustrate the alternative choice of Hamiltonian function given in (\ref{eq:Hamiltonian_MatrixReciprocalMatrix}) which is in the example of the elastic rod with local elasticity relation
\begin{equation}
\mathfrak{H}_0\left(u\right)=\frac{1}{2}\int_{a}^{b}\left(k\,u^{2}-T\,u\left(\frac{\partial^{2} u}{\partial z^{2}}\right)+\frac{p^{2}}{\rho A}\right)\,dz
\label{eq:H0ElasticRod}
\end{equation}
associated with the two-variable polynomial matrix
\[
H_0\left(\varsigma,\,\eta\right)  =  \left(\begin{array}{cc}
k-\frac{1}{2}T\left(\varsigma^{2}+\eta^{2}\right) & 0\\
0 & \frac{1}{\rho A}
\end{array}\right)
\]
The expression (\ref{eq:H0ElasticRod}) is indeed not the classical expression of the total energy of the rod but rather (\ref{eq:TotalEnergyElasticRodLocal}). Furthermore, the balance equation of the Hamiltonian functional becomes
\[
\frac{d\mathfrak{H}_0}{dt}=\frac{1}{2}\left[\left(T\frac{\partial u}{\partial z}\right)\,\frac{du}{dt} - \, \frac{d\left(T\frac{\partial u}{\partial z}\right)}{dt}  u \right]_{a}^{b}
\]
which has a far less easy interpretation than the balance equation (\ref{eq:BalanceEqFlexRod}).
\end{note}

As a second example, consider the elastic rod with non-local elasticity
relations, where both the Hamiltonian operator and the constitutive
relations of the co-energy variables induce boundary port variables. 
\begin{example}
\label{exa:NonlocalElastic_LagrangeBPHS} Consider again the example,
presented in Section \ref{subsec:Descriptor-formulation-NonlocalRod},
of an elastic rod made of composite material with non-local elasticity
relations (\ref{eq:NonlocalElasticityRelation}). Recall that the
elasticity relations are expressed as defining the strain $\epsilon$
and the stress $\sigma$ as function of the latent strain $\lambda$
in (\ref{eq:ImageReprNonLocalElasticity}). Using this expression,
the energy variables $x=\left(\begin{array}{ccc}
u & \epsilon & p\end{array}\right)^{\top}$ and the co-energy variables $e=\left(\begin{array}{ccc}
F^{el} & \sigma & v\end{array}\right)^{\top}$ are a function (\ref{eq:ConstRelEnergyImage}) of the latent variables
$\xi=\left(\begin{array}{ccc}
u & \lambda & p\end{array}\right)^{\top}$ defined by the reciprocal operator $\mathcal{R}$ in (\ref{eq:ConstRelEnergyImage})
associated with the operators $\mathcal{P}=\textrm{diag\ensuremath{\left(1,1-\mu\left(\frac{\partial}{\partial z}\right)^{2},\,1\right)}}$
and $\mathcal{S}=\textrm{diag\ensuremath{\left(k,T,\frac{1}{\rho A}\right)}}$.

From the polynomial matrix (\ref{eq:2VarPolynomialMatrix_R}) associated
with 
\[
R\left(s\right)=\left(\begin{array}{c}
P\left(s\right)\\
S\left(s\right)
\end{array}\right)=\left(\begin{array}{c}
\textrm{diag}\left(1,1-\mu s^{2},\,1\right)\\
\textrm{diag}\ensuremath{\left(k,T,\frac{1}{\rho A}\right)}
\end{array}\right)
\]
the boundary two-variable polynomial matrix $\bar{\Psi}\left(\zeta,\eta\right)$
is computed and factorized as in (\ref{eq:FactorBoundary2Polynomial}),
as follows 
\[
\bar{\Psi}\left(\zeta,\eta\right)=\textrm{diag}\left(0,T\,\mu\,\left(\zeta-\eta\right),\,0\right)=R_{\partial}^{\top}\left(\zeta\right)\Theta_{1}R_{\partial}\left(\eta\right)
\]
with

\begin{equation}
R_{\partial}\left(s\right)=\left(\begin{array}{ccc}
0 & T & 0\\
0 & \mu s & 0
\end{array}\right)\label{eq:BoundaryPolynomialNonlocalElasticity}
\end{equation}
Hence the boundary port-Hamiltonian model of the rod with non-local
elasticity relations is defined with respect to the Stokes-Dirac structure
$\mathscr{D}_{\mathcal{J}_{1}}$ associated with the Hamiltonian operator
$\mathcal{J}_{1}$ defined in (\ref{eq:HamSyst_VibrRod_Symmetry})
and the Stokes-Lagrange subspace $\mathcal{L}_{\mathcal{R}}$ associated
with the reciprocal operator $\mathcal{R}$ defined above. This leads
to the following equations defined by the Hamiltonian system (\ref{eq:HamSysNonlocalElasticityRelation})
augmented with the \emph{power} boundary variables (\ref{eq:BPvariables_VibrRod_Symmetry})
of the Stokes-Dirac structure $\mathscr{D}_{\mathcal{J}_{1}}$ 
\[
\left(\begin{array}{c}
f_{\partial}\\
e_{\partial}
\end{array}\right)=\left(\begin{array}{ccc}
0 & 0 & 1\\
0 & 1 & 0
\end{array}\right)\mathrm{tr}\,Q_{0}\left(\begin{array}{c}
u\\
\lambda\\
p
\end{array}\right)=\mathrm{tr}\left(\begin{array}{c}
\frac{p}{\left(\rho A\right)}\\
T\;\lambda
\end{array}\right)
\]
being the velocity $\frac{p}{\left(\rho A\right)}$ and, according
to the image representation (\ref{eq:ImageReprNonLocalElasticity}),
the stress $\sigma=T\;\lambda$ at the boundaries. Using (\ref{eq:BoundaryPolynomialNonlocalElasticity}),
the \emph{energy} boundary variables (\ref{eq:EnergyBoundaryPortVariables})
of the Stokes-Lagrange subspace $\mathcal{L}_{\mathcal{R}}$ are computed
as 
\begin{equation}
\left(\begin{array}{c}
\chi_{\partial}\\
\varepsilon_{\partial}
\end{array}\right)=\mathrm{tr}\left(\begin{array}{c}
T\lambda\\
\mu\,\frac{\partial\lambda}{\partial z}
\end{array}\right)\label{eq:NonlocalElastic_EnergyBoundaryVar}
\end{equation}
being the stress $\sigma=T\;\lambda$ and the nonlocal strain
component $\mu\,\frac{\partial\lambda}{\partial z}$ , evaluated at
the boundaries.

Let us now derive the Hamiltonian functional and its balance equation
from the reciprocal operator defining the Stokes-Lagrange subspace.
With a very similar calculation as in the preceding example, one obtains
the two-variable polynomial matrix 
\[
H\left(\varsigma,\,\eta\right)=\textrm{diag}\left(\begin{array}{ccc}
k & \left[T+T\mu\,\varsigma\eta\right] & \frac{1}{\rho A}\end{array}\right)
\]
defining the Hamiltonian 
\[
\mathfrak{H}\left(u\right)=\frac{1}{2}\int_{a}^{b}\sum_{i,j=1}^{M}\left(k\,u^{2}+T\lambda^{2}+\mu T\left(\frac{\partial\lambda}{\partial z}\right)^{2}+\frac{p^{2}}{\rho A}\right)\,dz
\]
which may again be identified with the mechanical energy. This time,
the Hamiltonian operator is a differential operator and for the balance
equation the boundary port variables of the Stokes-Dirac structure
need to be taken into account. Recalling the expression of port boundary
variables of the Stokes-Dirac structure (\ref{eq:BPvariables_VibrRod_Symmetry})
associated with the Hamiltonian operator $\mathcal{J}_{1}$ in \ref{eq:HamSysNonlocalElasticityRelation},
one obtains 
\[
\left(\begin{array}{c}
f_{\partial}\\
e_{\partial}
\end{array}\right)=\left(\begin{array}{ccc}
0 & 0 & 1\\
0 & 1 & 0
\end{array}\right)\mathrm{tr}\,Q_{0}\left(\begin{array}{c}
u\\
\lambda\\
p
\end{array}\right)=\mathrm{tr}\left(\begin{array}{c}
\frac{p}{\left(\rho A\left(z\right)\right)}\\
T\left(z\right)\;\lambda\left(z\right)
\end{array}\right)
\]
The energy boundary variables are given in (\ref{eq:NonlocalElastic_EnergyBoundaryVar}).
Hence the balance equation (\ref{eq:BalanceHamiltonian}) becomes
\begin{equation}
\frac{d\mathfrak{H}}{dt}=\left[T\lambda\frac{p}{\left(\rho A\left(z\right)\right)}\right]_{a}^{b}-\left[\mu\,\frac{\partial\lambda}{\partial z}\,\frac{d\left(T\lambda\right)}{dt}\right]_{a}^{b}\label{eq:NonLocalRod_Energybalance}
\end{equation}
where the right-hand side is the sum of the mechanical power at the boundary of the rod, due to local elasticity (depending on the coefficient $T$ only) and nonlocal elasticity (depending on the coefficient $\mu$). 

Notice that when $\mu=0$, that is the elasticity relation are only
defined by the local elasticity relation (\ref{eq:LocalElasticityRelation}),
then the local and the non-local strain are equal, $\epsilon=\lambda$,
and this Hamiltonian formulation reduces to the port-Hamiltonian
system (\ref{eq:LocalElasticityRelation}) with respect to the Hamiltonian
functional (\ref{eq:Hamiltonian0_VibratingRod}). Furthermore, in the energy
balance equation (\ref{eq:NonLocalRod_Energybalance}) only the term
due to the power port variables remains.

\end{example}

\section{Conclusions}

In this paper, we have extended the definition of boundary port-Hamiltonian
systems to the case where the Hamiltonian functional is not explicitly
given, but instead a Lagrangian subspace is defined in the cotangent
bundle of the state variables. This \emph{Lagrangian subspace is defined
by maximally reciprocal differential operators}, where reciprocal
refers to Maxwell's reciprocity relations, extending the case of the
graph of the differential of a Hamiltonian functional. For physical
systems, these reciprocal operators correspond to the constitutive relations
associated with the energy of the system; for instance the elasticity
constitutive relations of elastic rods. It has then be shown that
for open systems one has to introduce port variables, called \emph{energy
boundary port variables}, associated with the reciprocal differential
operators, and their expression in terms of boundary operators has been
derived. Finally, we have defined boundary port-Hamiltonian systems
defined with respect to a Stokes-Dirac structure and a Stokes-Lagrange
subspace. This strictly generalizes the previous definition of boundary
port-Hamiltonian systems. The main extension is that it includes the symplectic formulations of Hamiltonian systems and allows to handle
non-local constitutive relations. Furthermore, an important difference
is that it has now two types of port variables: first, the power
port boundary variables associated with the Stokes-Dirac structure,
and second, the energy port boundary variables associated with the
Stokes-Lagrange subspace. It should be observed that the definition
of the energy port boundary variables constitutes a strict generalization
of the descriptor port-Hamiltonian systems suggested in \cite{Beattie_MMCS_2018,Zwart_MCMDS19_nonlocal}.
Finally we have identified a Hamiltonian functional associated with
the Lagrangian subspace and derived its balance equation which contains two types of terms: the product of the power boundary
variables minus the product of the (time derivative of the) energy boundary port variables.

All this was illustrated by the running example of the elastic rod. However, many other physical systems are of the same type. For example, the same equations appear in the Hamiltonian formulation of the dynamics of a transmission line. See in particular \cite{jeltsema} for the symplectic formulation of the telegrapher's equations. In future work we also plan to extend the obtained results to systems on higher-dimensional spatial domains.

Although the present paper has only considered \emph{linear} boundary control port-Hamiltonian systems, many
of the results can be extended to the case of \emph{nonlinear} reciprocal
operators; of course with a more general definition of the boundary
operators defining the energy boundary port variables. Another continuation
of this work is to use the theory of semi-group operator theory and descriptor
systems in order to derive the properties of this extended class of boundary
port-Hamiltonian systems.


\bibliographystyle{siamplain}

\end{document}